\renewcommand\footnotemark{}
\numberwithin{equation}{section} 
\theoremstyle{plain}
\newtheorem*{theo}{Theorem}
\newtheorem{theorem}{Theorem}
\newtheorem{lemma}[theorem]{Lemma}
\newtheorem{corollary}[theorem]{Corollary}
\newtheorem*{cor}{Corollary}
\newtheorem{proposition}[theorem]{Proposition}
\newtheorem{scholium}[theorem]{Scholium}
\newtheorem*{claim}{Claim}
\theoremstyle{definition}
\newcommand{\propi}{\Pi}
\newcommand{\Mod}{\mathrm{Mod}}
\newcommand{\Pure}{\mathcal{G}}
\newcommand{\Homeo}{\mathrm{Homeo}}
\newcommand{\Aut}{\mathrm{Aut}}
\newcommand{\Out}{\mathrm{Out}}
\newcommand{\checkGamma}{\check{\Pure}}
\newcommand{\QQ}{\mathbb{Q}}
\newcommand{\ZZ}{\mathbb{Z}}
\newcommand{\curv}{\mathcal{C}}
\newcommand{\durv}{\mathcal{D}}
\newcommand{\checkcurv}{\check{\mathcal{C}}}
\newcommand{\calA}{\mathcal{A}}
\newcommand{\calB}{\mathcal{B}}
\newcommand{\calE}{\mathcal{E}}
\newcommand{\calK}{\mathcal{K}}
\newcommand{\calN}{\mathcal{N}}
\newcommand{\calP}{\mathcal{P}}
\newcommand{\calS}{\mathcal{S}}
\newcommand{\calX}{\mathcal{X}}
\newcommand{\rmH}{\mathrm{H}}
\newcommand{\frakD}{\mathfrak{D}}
\newcommand{\dottedS}{\mathring{S}}
\newcommand{\dottedpi}{\mathring{\pi}}
\newcommand{\doubledottedS}{\! \! \! \mathring{\ \, S} \mathring{\! \! \! \! \! \phantom{S}} \, }
\newcommand{\co}{\colon\thinspace}
\newcommand{\twoheadlongrightarrow}{\relbar\joinrel\twoheadrightarrow}
\newcommand{\smallinfinity}{{\hspace{.5pt} \rotatebox{90}{\footnotesize $8$}}}
\newcommand{\scriptinfinity}{{\rotatebox{90}{\scriptsize $8$}}}
\begin{document}

\title{\textbf{Congruence kernels around affine curves}}
\author{Richard Peabody Kent IV\thanks{The author was supported in part by an NSF postdoctoral fellowship and NSF grant DMS-1104871.}}
\date{February 14, 2014}

\maketitle

\begin{abstract}
Let $S$ be a smooth affine algebraic curve, and let $\dottedS$ be the Riemann surface obtained by removing a point from $S$. 
We provide evidence for the congruence subgroup property of mapping class groups by showing that the congruence kernel
\[
\ker\bigg(\widehat{\Mod(\dottedS)} \to \Out\Big(\widehat{\pi_1(\dottedS)}\Big)\bigg)
\]
lies in the centralizer of every braid in $\smash{\Mod(\dottedS)}$.
As  a corollary, we obtain a new proof of Asada's theorem that the congruence subgroup property holds in genus one.
We also obtain simple--connectivity of Boggi's procongruence curve complex $\checkcurv(\dottedS)$ when $S$ is affine, and a new proof of Matsumoto's theorem that the congruence kernel depends only on the genus in the affine case.
\end{abstract}

\section{Introduction}
Let $S$ be a Riemann surface of finite type, and let $\Mod(S) = \pi_0\big(\Homeo^+(S)\big)$ be its mapping class group.
If $C$ is a finite index characteristic subgroup of $\pi = \pi_1(S)$, there is a natural map
\[
\Mod(S) \to \Out(\pi/C),
\]
 and a subgroup containing the kernel of such a map is called a \textit{congruence subgroup} of $\Mod(S)$---the kernels themselves are \textit{principal congruence subgroups}.
Ivanov's \textit{congruence subgroup problem} asks if every finite index subgroup of $\Mod(S)$ is congruence, and we say that $\Mod(S)$ has the \textit{congruence subgroup property} if they are---this is Problem 2.10 of Kirby's List \cite{Kirby}, see \cite{IvanovProblems,Ivanov1987,Ivanov1987.translation}.
The mapping class group is known to possess this property when $S$ has genus no more than two.
In genus zero, the theorem is due to Diaz, Donagi, and Harbater \cite{DiazDonagiHarbater} (see also \cite{McReynoldsThurston} and Section \ref{BirmanInjects} here); in genus one, to Asada \cite{Asada.2001} (see also \cite{BuxErshovRapinchuk}, \cite{BoggiOpen}, and Section \ref{Section:GenusOne} here); and, in genus two, to Boggi \cite{BoggiOpen,BoggiClosed}.

If $G$ is a group, we let $\widehat{G}$ denote its profinite completion.
There is a natural map
\[
\widehat{\, \Mod(S)} \to \Out(\widehat{\pi})
\]
whose kernel $\mathbf{K} = \mathbf{K}(S)$ is the \textit{congruence kernel}. Vanishing of $\mathbf{K}$ is equivalent to the congruence subgroup property.

Due to issues related to torsion, it is advantageous to replace $\Mod(S)$ with a nice subgroup of finite index.
To that end, let $\Pure(S)$ be the congruence subgroup of $\Mod(S)$ consisting of those mapping classes acting trivially on $\mathrm{H}_1(S; \mathbb{F}_3)$.
As intersections of congruence subgroups are congruence (see Lemma \ref{finiteintersections}), a finite index subgroup of $\Mod(S)$ contains a congruence subgroup if and only if its intersection with $\Pure(S)$ does.
So, letting $\widehat{\Pure}(S) = \! \widehat{\, \Pure(S)}$, we need only check injectivity of 
\[
\widehat{\Pure}(S) \to \Out(\widehat{\pi})
\]
to establish the congruence subgroup property.
Thanks to a theorem of Grossman \cite{Grossman}, the group $\Pure(S)$ injects into $\Out(\widehat{\pi})$, and we let $\checkGamma(S)$ be the closure of its image.
By the universal property of profinite completions, $\checkGamma(S)$ is the image of $\widehat{\Pure}(S) \to \Out(\widehat{\pi})$.

Let $\curv(S)$ be the curve complex of $S$---the simplicial flag complex whose vertices are isotopy classes of nonperipheral simple loops on $S$  joined by an edge if they may be realized as disjoint loops on $S$.
In his attack on the congruence subgroup problem \cite{Boggi.2006}, Boggi introduced profinite versions of this complex, subsequently studied in \cite{BoggiLochak,Lochak,Lochak2011}, which we now discuss.

Let $\calA = \calA(S)$ be the inverse system of all finite index subgroups of $\Pure(S)$ under inclusion and let $\calK = \calK(S)$ be the inverse system of all congruence subgroups of $\Pure(S)$.
The group $\Pure(S)$ is torsion--free \cite{Serre}.
Not only that, but $\Pure(S)$ is \textit{pure}, meaning that its elements fix a simplex of $\curv(S)$ if and only if they fix all of the vertices of that simplex, see Corollary 1.8 of \cite{Ivanov}.
It follows that if $\mathfrak{a}$ lies in $\calA$, the quotient
\[
\curv^\mathfrak{a} = \curv(S)/\mathfrak{a}
\]
is naturally a finite simplicial complex.
The \textit{profinite curve complex} is the limit
\[
\widehat{\curv}(S) = \lim_{\calA} \curv^\mathfrak{a}
\]
and the \textit{procongruence curve complex} is 
\[
\checkcurv(S) = \lim_{\calK} \curv^\kappa .
\]
These limits may be taken in the topological category, but the resulting spaces are quite unruly.
A natural solution, that we adopt, is to consider the $\curv^\mathfrak{a}$ simplicial finite sets and take limits in the category of simplicial profinite sets.
The objects of this category are simplicial objects in the category of profinite sets, called \textit{profinite spaces}, and the morphisms are the natural transformations between them. 
See Section \ref{ProfiniteStuff} and \cite{Boggi.2006,Lochak2011,Quick,QuickRemarks}.

Passing to limits, the action of $\Pure(S)$ on $\curv(S)$ yields an action of $\widehat{\Pure}(S)$ on $\widehat{\curv}(S)$ and of $\checkGamma(S)$ on $\checkcurv(S)$.
The zero--skeleton of $\curv(S)$ injects into the zero--skeleta of $\widehat{\curv}(S)$ and $\checkcurv(S)$, and we let context determine of which space a simple loop is to be considered a vertex.

If $G$ is a group acting on a set $X$, we let $G_x$ denote the stabilizer in $G$ of an element $x$ in $X$.
Most often, $X$ is the curve complex or one of its profinite cousins, and $x$ is a vertex.
If $X$ and $Y$ are subsets of $\widehat{\Pure}(S)$ and $\checkGamma(S)$, we let $\overline{X}$ and $Y^\star$ denote their closures, respectively.

Let $\dottedS$ denote the surface obtained from $S$ by removing a point, let $\doubledottedS$ denote the surface obtained from $\dottedS$ by removing a point, and let $\dottedpi = \pi_1(\dottedS)$.

Our main theorem is the following, proven in Section \ref{cornering}.
\begin{theo} Let $S$ be a smooth affine curve.
Let $\calP$ be the set of nonperipheral simple loops on $\dottedS$ that are peripheral in $S$.
Then
\[
\mathbf{K}(\dottedS) \subset \bigcap_{\gamma \, \in \calP} \, \widehat{\Pure(\dottedS)}_\gamma \ .
\]
In particular, the kernel $\smash{\mathbf{K}(\dottedS)}$ lies in the centralizer of every braid in $\smash{\Mod(\dottedS)}$.
\end{theo}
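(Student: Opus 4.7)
The strategy combines the Birman exact sequence with an anabelian analysis of peripheral structure in $\widehat{\dottedpi}$. The geometric setup is the following: each $\gamma\in\calP$ bounds in $\dottedS$ a twice-punctured disk whose punctures are the point $q$ removed from $S$ to form $\dottedS$ and some puncture $p$ of $S$. With appropriate basepoints, the class of $\gamma$ in $\dottedpi$ is a product $\alpha_q\alpha_p$ of peripheral generators. Via the Birman exact sequence
\[
1 \to \pi_1(S,q) \to \Mod(\dottedS) \to \Mod(S) \to 1,
\]
the Dehn twist $T_\gamma$ is the point-pushing image of the loop around $p$ in $\pi_1(S,q)$, so the braids in $\Mod(\dottedS)$ correspond exactly to point-pushings of peripheral loops of $S$.

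Given $\mathbf{k}\in\mathbf{K}(\dottedS)$, the conclusion $\mathbf{k}\in\widehat{\Pure(\dottedS)}_\gamma$ is equivalent---because $\gamma$ is a discrete vertex of $\widehat{\curv}(\dottedS)$---to $\mathbf{k}$ lying in the closure of $\Pure(\dottedS)_\gamma$ in $\widehat{\Pure(\dottedS)}$. Triviality of $\mathbf{k}$ in $\Out(\widehat{\dottedpi})$ says that, after choosing a basepoint, $\mathbf{k}$ is realized by conjugation by some $g\in\widehat{\dottedpi}$. I would then argue that $g$ must centralize $\gamma$: because $\mathbf{k}\in\widehat{\Pure(\dottedS)}$ preserves each peripheral conjugacy class individually---the definition of $\Pure(\dottedS)$ via trivial action on $\rmH_1(\dottedS;\FF_3)$ forces each puncture to be fixed rather than merely permuted---the element $g$ must send $\alpha_q$ and $\alpha_p$ to conjugates of themselves. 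Since centralizers of nontrivial peripheral elements in the profinite completion of a punctured surface group are procyclic, each of these constraints pins $g$ down up to a peripheral factor, and combining them forces $g$ to centralize the product $\alpha_q\alpha_p=\gamma$.

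The principal obstacle is this last centralizer calculation in $\widehat{\dottedpi}$---extracting from the two individual constraints that $g$ centralizes the specific product $\gamma$, rather than merely an element in its conjugacy class. I expect the procongruence curve complex $\checkcurv(\dottedS)$ introduced above to provide the necessary bookkeeping: the action of $\mathbf{K}(\dottedS)$ on $\checkcurv(\dottedS)$ is trivial by definition of the kernel, while the natural map $\widehat{\curv}(\dottedS)\to\checkcurv(\dottedS)$ is compatible with stabilizers, so one can pull back information between the two complexes without losing track of how $g$ intertwines the inertia subgroups on either side of the cut along $\gamma$. Once $\mathbf{k}\in\widehat{\Pure(\dottedS)}_\gamma$ is established, the ``in particular'' assertion is immediate: the identity $\mathbf{k}T_\gamma\mathbf{k}^{-1}=T_{\mathbf{k}(\gamma)}=T_\gamma$ is valid in $\widehat{\Mod(\dottedS)}$ by continuity from $\Mod(\dottedS)$, so $\mathbf{k}$ centralizes every braid $T_\gamma$.
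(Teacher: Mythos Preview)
Your argument has a genuine gap at the sentence ``Triviality of $\mathbf{k}$ in $\Out(\widehat{\dottedpi})$ says that, after choosing a basepoint, $\mathbf{k}$ is realized by conjugation by some $g\in\widehat{\dottedpi}$.'' There is no natural map $\widehat{\Pure(\dottedS)}\to\Aut(\widehat{\dottedpi})$ through which $\mathbf{k}$ could act; the only map is to $\Out(\widehat{\dottedpi})$, and the image of $\mathbf{k}$ there is simply the identity. One can lift an approximating sequence $\varphi_n\in\Pure(\dottedS)$ to $\Aut(\dottedpi)$ by choosing basepoint--fixing representatives, but there is no reason these lifts converge in $\Aut(\widehat{\dottedpi})$; arranging such convergence is tantamount to what you are trying to prove. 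Even if one grants an inner action by some $g$, the subsequent step is vacuous: conjugation by \emph{any} $g$ sends every element to a conjugate of itself, so the constraint that $\mathbf{k}$ ``preserves each peripheral conjugacy class individually'' imposes no condition whatsoever on $g$. Your appeal to $\checkcurv(\dottedS)$ does not rescue this: $\mathbf{K}(\dottedS)$ acts trivially on $\checkcurv(\dottedS)$ tautologically, but the map $\widehat{\curv}(\dottedS)\to\checkcurv(\dottedS)$ goes the wrong direction to pull stabilizer information back to $\widehat{\curv}(\dottedS)$.

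The paper's proof is entirely different and never attempts an anabelian analysis of $\mathbf{k}$ inside $\widehat{\dottedpi}$. The core input is Theorem~\ref{product} (via Lemma~\ref{BKlemma}): for every congruence subgroup $\kappa\subset\Pure(\dottedS)$ the product $\calB^\kappa\kappa_\gamma$ is itself congruence. The key computation there lifts an element of a suitable principal congruence subgroup of $\Pure(S)$ to $\mathfrak{p}_\gamma$ using the amalgamated--product decomposition of $\dottedpi$ along $\gamma$. Cofinality of $\{\calB^\kappa\kappa_\gamma\}$ in $\calK(\dottedS)$ then gives
\[
\mathbf{K}(\dottedS)=\bigcap_{\kappa}\overline{\calB^\kappa}\,\overline{\kappa_\gamma},
\]
and a short limit argument (Lemma~\ref{ClosureOfProductsIntersectTrivially}, relying on Theorem~\ref{birmanseparable}) collapses this double intersection to $\bigcap_\kappa\overline{\kappa_\gamma}\subset\widehat{\Pure}(\dottedS)_\gamma$. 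In short, the substance is the explicit construction of enough congruence subgroups near $\Pure(\dottedS)_\gamma$, not a centralizer computation in $\widehat{\dottedpi}$.
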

\noindent
In Section \ref{Section:GenusOne}, we show that the congruence subgroup property in genus one follows quickly from this theorem.

The Birman exact sequence \cite{Birman.1969}
\[
\begin{tikzpicture}[>=to, line width = .075em, baseline=(current bounding box.center)]
\matrix (m) [matrix of math nodes, column sep=1.5em, row sep = 1em, text height=1.5ex, text depth=0.25ex]
{
1  & \pi & \Pure(\dottedS) & \Pure(S) &  1 \\
};
\path[->,font=\scriptsize]
(m-1-1) edge 								(m-1-2)
(m-1-2) edge 								(m-1-3)
(m-1-3) edge 								(m-1-4)
(m-1-4) edge 								(m-1-5)
	;
\end{tikzpicture}
\]
has a profinite analog
\[
\begin{tikzpicture}[>=to, line width = .075em, baseline=(current bounding box.center)]
\matrix (m) [matrix of math nodes, column sep=1.5em, row sep = 1em, text height=1.5ex, text depth=0.25ex]
{
1  & \widehat{\pi} & \widehat{\Pure}(\dottedS) & \widehat{\Pure}(S) &  1. \\
};
\path[->,font=\scriptsize]
(m-1-1) edge 								(m-1-2)
(m-1-2) edge 								(m-1-3)
(m-1-3) edge 								(m-1-4)
(m-1-4) edge 								(m-1-5)
	;
\end{tikzpicture}
\]
Our techniques provide a new proof of M. Matsumoto's procongruence version of this sequence (which follows from Theorem 2.2 of \cite{Matsumoto}).
See Section \ref{IndependentKernels}.
\begin{theo}[Matsumoto] For affine curves, the congruence kernel depends only on the genus. 
In other words, $\mathbf{K}(\dottedS) \cong \mathbf{K}(S)$ when $S$ is affine.
In this case, there is a short exact sequence
\[
\begin{tikzpicture}[>= to, line width = .075em, baseline=(current bounding box.center)]
\matrix (m) [matrix of math nodes, column sep=1.5em, row sep = 1em, text height=1.5ex, text depth=0.25ex]
{
1 &  \widehat{\pi}  & \checkGamma(\dottedS) & \checkGamma(S) & 1 .\\
};
\path[->,font=\scriptsize]
(m-1-1) edge (m-1-2)
(m-1-2) edge (m-1-3)
(m-1-3) edge (m-1-4)
(m-1-4) edge (m-1-5)
	;
\end{tikzpicture}
\]
\end{theo}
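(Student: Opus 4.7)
The plan is to combine the profinite Birman exact sequence with the Main Theorem to pin down the kernel of $\checkGamma(\dottedS) \to \checkGamma(S)$. Filling in the added puncture of $\dottedS$ produces the Birman exact sequence
\[
1 \to \pi \to \Pure(\dottedS) \to \Pure(S) \to 1,
\]
in which $\pi = \pi_1(S)$ is included as the point-pushing subgroup of $\Mod(\dottedS)$; a routine check confirms that point-pushing acts trivially on $\rmH_1(\dottedS;\FF_3)$, so the sequence lives at the $\Pure$-level. Since surface groups are good in Serre's sense, profinite completion preserves exactness, yielding
\[
1 \to \widehat{\pi} \to \widehat{\Pure(\dottedS)} \to \widehat{\Pure(S)} \to 1.
\]
The surjection $\dottedpi \twoheadrightarrow \pi$ coming from filling in the puncture sets up a compatible descent $\checkGamma(\dottedS) \to \checkGamma(S)$ (defined on the sublocus of $\Out(\widehat{\dottedpi})$ preserving the conjugacy class of the peripheral loop at the added puncture), into which the displayed Birman sequence maps as a commutative diagram.

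Next I would invoke the Main Theorem to bound $\mathbf{K}(\dottedS) \cap \widehat{\pi}$. For each puncture $p_i$ of $S$, the loops in $\calP$ that encircle $p_i$ together with the added puncture form an orbit whose associated Dehn twists $T_\gamma \in \pi \subset \Mod(\dottedS)$ sweep out the full $\pi$-conjugacy class of a peripheral generator $\mu_i$ of $\pi$. By the Main Theorem, every element of $\mathbf{K}(\dottedS)$ centralizes each such $T_\gamma$, so any element of $\mathbf{K}(\dottedS) \cap \widehat{\pi}$ centralizes the entire conjugacy class of $\mu_i$ inside the profinite free group $\widehat{\pi}$. The common centralizer of a conjugacy class of a nontrivial element in a nonabelian (profinite) free group is trivial, so
\[
\mathbf{K}(\dottedS) \cap \widehat{\pi} = 1
\]
whenever $\pi$ is nonabelian free; the low-complexity exceptions $S \in \{\CC, \CC^\ast\}$ are handled directly since their mapping class groups are already trivial or abelian.

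A standard diagram chase against the profinite Birman sequence now gives both conclusions at once. Injectivity of $\widehat{\pi} \to \checkGamma(\dottedS)$ (the content of the previous step) forces the kernel of $\checkGamma(\dottedS) \to \checkGamma(S)$ to be exactly $\widehat{\pi}$, establishing the asserted short exact sequence, and the snake lemma applied to the map of short exact sequences collapses the vertical kernels to yield $\mathbf{K}(\dottedS) \cong \mathbf{K}(S)$.

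The principal obstacle I anticipate is not the Main Theorem but the bookkeeping around the profinite Birman diagram itself: one needs to know that the point-pushing image of $\pi$ closes up to a copy of $\widehat{\pi}$ inside $\Out(\widehat{\dottedpi})$ (using Grossman's theorem and the goodness of $\pi$), and one needs exactness of the induced Birman map on $\widehat{\Pure}$. Once that foundation is in place, the centralizer argument using $\calP$ takes only a few lines.
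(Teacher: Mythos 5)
Your proposal correctly identifies the easy half of the argument but misses the step that carries all the content. The gap is in the sentence ``Injectivity of $\widehat{\pi} \to \checkGamma(\dottedS)$ \ldots forces the kernel of $\checkGamma(\dottedS) \to \checkGamma(S)$ to be exactly $\widehat{\pi}$.'' It does not. Injectivity (which is Theorem \ref{birmanseparable}, Asada's argument --- and which already gives $\mathbf{K}(\dottedS) \cap \overline{\calB} = 1$ for free, so your Herfort--Ribes centralizer argument for that intersection is superfluous) only shows that $\ker\big(\checkGamma(\dottedS) \to \checkGamma(S)\big)$ \emph{contains} a faithful copy of $\widehat{\pi}$, namely $\calB^\star$. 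A priori the kernel could be strictly larger: it is the image in $\checkGamma(\dottedS)$ of $\widehat{F}^{-1}(\mathbf{K}(S))$, so it equals $\calB^\star$ if and only if $\mathbf{K}(S) \subset \widehat{F}(\mathbf{K}(\dottedS))$, i.e.\ if and only if every element of the congruence kernel downstairs lifts into the congruence kernel upstairs. Your snake-lemma step assumes exactness of the bottom row of the diagram, which is equivalent to this containment --- so the diagram chase is circular.

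Establishing $\mathbf{K}(S) \subset \widehat{F}(\mathbf{K}(\dottedS))$ is where the Main Theorem is actually needed, and in a different way than you use it. The paper's route: Theorem \ref{sequester} places $\mathbf{K}(\dottedS)$ inside the stabilizer $\widehat{\Pure}(\dottedS)_\gamma$ of a curve $\gamma \in \calP$, indeed $\mathbf{K}(\dottedS) = \bigcap_\calK \overline{\kappa_\gamma}$; Scholium \ref{pushcongruence} (the ``push-forward'' companion to Lemma \ref{pullcongruence}) shows that $F(\kappa_\gamma)$ is a congruence subgroup of $\Pure(S)$ for every congruence $\kappa$; and a compactness argument identifies $\bigcap_\calK \widehat{\ZZ}\,\overline{\kappa_\gamma}$ with $\widehat{\ZZ}\,\mathbf{K}(\dottedS)$, yielding $\widehat{\Phi}^{-1}(\mathbf{K}(S)) \subset \widehat{\ZZ}\,\mathbf{K}(\dottedS)$ and hence the reverse containment. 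Your proof has no analogue of this push-forward step, and without it neither the isomorphism $\mathbf{K}(\dottedS) \cong \mathbf{K}(S)$ nor the exactness of the sequence at $\checkGamma(\dottedS)$ follows. (Your appeal to goodness for left-exactness of the completed Birman sequence is also looser than what is needed --- the relevant fact is that the \emph{congruence} topology on $\calB$ induces the full profinite topology, which is exactly Theorem \ref{birmanseparable} --- but that is a secondary issue.)
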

\noindent It is a more difficult theorem of Boggi \cite{BoggiClosed} and, independently, Hoshi--Mochizuki \cite{HoshiMochizuki2009} that this is still the case when $S$ is projective.
Note that Matsumoto's theorem immediately implies that the congruence subgroup property holds when the genus is zero, as a thrice--punctured sphere has trivial mapping class group.  
In fact, and as first shown by Asada \cite{Asada.2001}, one does not need the full strength of Matsumoto's theorem to obtain the congruence subgroup property in genus zero, see Section \ref{BirmanInjects}.

We conjecture that the braid subgroup $\mathfrak{b}(\dottedS)$ of $\widehat{\Pure}(\dottedS)$ has trivial centralizer.
This may prove difficult to establish, as it is unknown if the center of $\widehat{\Pure}(\dottedS)$ vanishes, though it is known that $\checkGamma(\dottedS)$ is center--free (see Corollary 6.7 of \cite{Lochak2011} for the affine case and Theorem 6.13 of \cite{HoshiMochizuki2012} for the general case).
By the Boggi--Matsumoto short exact sequence
\[
\begin{tikzpicture}[>= to, line width = .075em, baseline=(current bounding box.center)]
\matrix (m) [matrix of math nodes, column sep=1.5em, row sep = 1em, text height=1.5ex, text depth=0.25ex]
{
1 &  \widehat{\pi}  & \checkGamma(\dottedS) & \checkGamma(S) & 1 ,\\
};
\path[->,font=\scriptsize]
(m-1-1) edge (m-1-2)
(m-1-2) edge (m-1-3)
(m-1-3) edge (m-1-4)
(m-1-4) edge (m-1-5)
	;
\end{tikzpicture}
\]
our main theorem would imply the congruence subgroup property in general if, after adding any number of punctures, there were always a braid whose centralizer in $\widehat{\Pure}(\dottedS)$ lied in $\widehat{\pi}$.
We expect that a generic pseudo-Anosov braid in $\pi$ should suffice.
We record these observations in a theorem.
Let $S_{g,n}$ be the closed Riemann surface of genus $g$ with $n$ punctures, and let $\pi_{g,n}$ denote $\pi_1(S_{g,n})$.
\begin{theo}
Suppose that $2g + n \geq 3$, let $k > 1$ be a natural number, and consider the Birman sequence
\[
\begin{tikzpicture}[>=to, line width = .075em, baseline=(current bounding box.center)]
\matrix (m) [matrix of math nodes, column sep=1.5em, row sep = 1em, text height=1.5ex, text depth=0.25ex]
{
1  & \widehat{\pi}_{g,\, n+k-1} & \widehat{\Pure}(S_{g,\, n+k}) & \widehat{\Pure}(S_{g,\, n+k-1}) &  1. \\
};
\path[->,font=\scriptsize]
(m-1-1) edge 								(m-1-2)
(m-1-2) edge 								(m-1-3)
(m-1-3) edge 								(m-1-4)
(m-1-4) edge 								(m-1-5)
	;
\end{tikzpicture}
\]
Then $\Mod(S_{g,n})$ has the congruence subgroup property if there is a $B$ in $\pi_{g,\, n+k-1}$ whose centralizer in $\widehat{\Pure}(S_{g,\, n+k})$ lies in $\widehat{\pi}_{g,\, n+k-1}$. 
For example, a $B$ in $\pi_{g,\, n+k-1}$ whose centralizer equals $\overline{\langle B \rangle}$ implies the congruence subgroup property for $\Mod(S_{g,n})$.
\qed
\end{theo}

\begin{center}
. \quad . \quad .
\end{center}
\noindent
G. Quick has developed the homotopy theory of profinite spaces \cite{Quick}.
In particular, he defines a covariant functor $\Pi_1(\ \cdot \ )$ serving as the fundamental group that assigns a profinite group $\Pi_1(\mathcal{X})$ to a profinite space $\mathcal{X}$.\footnote{Quick uses the notation $\pi_n(\, \cdot \, )$ for his homotopy groups, but, to avoid confusion, we do not.}
For a simplicial finite set $\mathcal{X}$, the group $\Pi_1(\mathcal{X})$ agrees with the profinite completion $\widehat{\pi}_1(\mathcal{X})$ of the usual fundamental group $\pi_1(\mathcal{X})$.
Furthermore, 
the fundamental group $\Pi_1(\, \cdot \, )$ commutes with cofiltered limits of simplicial finite sets.
See Section \ref{ProfiniteStuff}.

Boggi has observed that $\Pi_1(\widehat{\curv}(S)) = \lim_\calA  \widehat{\pi}_1\big(\curv^\mathfrak{a}(S)\big)$ vanishes unless $S$ is of small complexity, see Theorem \ref{BoggiFundamentalGroup}.
He has also shown that the Congruence Subgroup Problem holds for all surfaces $S$ if and only if the profinite space $\checkcurv(F)$ is simply--connected for all surfaces $F$, see Corollary 7.2 of \cite{Boggi1111.2372}.\footnote{It is important to emphasize that simple--connectivity of $\checkcurv(S)$ alone is not enough to deduce the congruence subgroup property for $\Mod(S)$: one really needs simple--connectivity of $\checkcurv(F)$ for all surfaces $F$ of lower or equal complexity.}
We refer the reader to \cite{Lochak2011} for an in--depth discussion of the interaction of profinite Teichm\"uller theory and Quick's homotopy theory.

The proof of our main theorem shows that $\checkcurv(\dottedS)$ is simply--connected when $S$ is affine, see Theorem \ref{sequester}.
\begin{theo}  Let $S=S_{g,n}$ be a smooth affine curve with $\chi(S) < 0$ and $3g-3 + n \geq 1$.
Then $\checkcurv(\mathring{S})$ is simply--connected:
\[
\Pi_1(\checkcurv(\mathring{S})) = \lim_{\calK} \, \widehat{\pi}_1(\curv^\kappa(\dottedS)) 
= 1.
\]
\end{theo}
 
\begin{cor}
If $S=S_{g,n}$ is affine with $\chi(S) < 0$ and $3g-3 + n \geq 1$, then, for any prime number $p$,
\[
\rmH_1(\checkcurv(\dottedS) ; \mathbb{F}_p) 
:= \lim_\calK \, \rmH_1 \big(\curv^\kappa(\dottedS) ; \mathbb{F}_p \big)
= 0
.
\]
\end{cor}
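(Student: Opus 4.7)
The plan is to deduce the corollary directly from Theorem~\ref{sequester} by applying the Hurewicz theorem and passing to the inverse limit. For each congruence subgroup $\kappa \in \calK$, the quotient $\curv^\kappa(\dottedS)$ is a finite simplicial complex, so its fundamental group is finitely generated and Hurewicz gives a natural surjection
\[
\pi_1(\curv^\kappa(\dottedS)) \twoheadrightarrow \rmH_1(\curv^\kappa(\dottedS); \mathbb{F}_p).
\]
Since the target is a finite $\mathbb{F}_p$-vector space, the universal property of the profinite completion factors this through $\widehat{\pi}_1(\curv^\kappa(\dottedS))$, producing a levelwise surjective morphism of inverse systems of profinite groups indexed by $\calK$. (If $\curv^\kappa(\dottedS)$ is disconnected, we apply Hurewicz componentwise and sum.)

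The second and final step is the formal observation that cofiltered inverse limits of profinite groups preserve surjectivity. Given any $(v_\kappa) \in \lim_\calK \rmH_1(\curv^\kappa(\dottedS); \mathbb{F}_p)$, the preimages
\[
F_\kappa = \bigl\{\, g \in \widehat{\pi}_1(\curv^\kappa(\dottedS)) : g \mapsto v_\kappa \,\bigr\}
\]
are non-empty closed subsets of profinite (hence compact Hausdorff) groups, and by naturality they form an inverse subsystem of $\bigl\{\, \widehat{\pi}_1(\curv^\kappa(\dottedS)) \,\bigr\}$. The inverse limit of a cofiltered diagram of non-empty compact Hausdorff spaces is non-empty, by the finite intersection property, so $(v_\kappa)$ lifts to some $(g_\kappa) \in \lim_\calK \widehat{\pi}_1(\curv^\kappa(\dottedS))$. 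Theorem~\ref{sequester} forces $(g_\kappa) = 1$, hence $(v_\kappa) = 0$, giving the desired vanishing.

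All of the substantive work has already been carried out in Theorem~\ref{sequester}; the corollary is a formal consequence. There is no real obstacle here, only the routine componentwise application of Hurewicz in the possibly disconnected case and the standard compactness argument for lifting compatible families through a levelwise surjective morphism of inverse systems of profinite groups.
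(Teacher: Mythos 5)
Your proof is correct and is essentially the paper's own argument: the paper likewise uses the surjections $\widehat{\pi}_1(\curv^\kappa(\dottedS)) \twoheadrightarrow \rmH_1(\curv^\kappa(\dottedS);\mathbb{F}_p)$ together with exactness of inverse limits on profinite groups and Theorem~\ref{sequester}. Your compactness/finite-intersection argument is just the standard proof of that exactness, which the paper cites rather than reproves.
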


In Section \ref{Section:Thoughts}, we record some thoughts on the fundamental groups of the finite--level complexes $\curv^\mathfrak{a}$.

\medskip
\noindent \textbf{Acknowledgments.} The author thanks  
Marco Boggi, 
Tom Church,
Jordan Ellenberg,
Chris Leininger, 
Pierre Lochak,
Ben McReynolds, 
Andy Putman, 
Gereon Quick,
and 
Ben Wieland 
for many useful conversations.
The author thanks the referees for their careful readings and many helpful comments.  The author also thanks the Institute for Advanced Study and the Park City Math Institute, where some of this work was carried out.

\section{Point pushing}

We make extensive use of the Birman exact sequence \cite{Birman.1969}:
\begin{equation}\label{BirmanSequence.Equation}
\begin{tikzpicture}[>=to, line width = .075em, baseline=(current bounding box.center)]
\matrix (m) [matrix of math nodes, column sep=1.5em, row sep = 1em, text height=1.5ex, text depth=0.25ex]
{
1  & \pi & \Pure(\dottedS) & \Pure(S) &  1. \\
};
\path[->,font=\scriptsize]
(m-1-1) edge 								(m-1-2)
(m-1-2) edge 		node[auto] {$\tau$}		(m-1-3)
(m-1-3) edge 		node[auto] {$F$}			(m-1-4)
(m-1-4) edge 								(m-1-5)
	;
\end{tikzpicture}
\end{equation}
The map $F$ is the natural map obtained by ``forgetting the puncture,'' or, in other words, by extending homeomorphisms from $\dottedS$ to $S$.
The image of $\pi$ in $\Pure(\dottedS)$ is the subgroup  $\calB$ of elements represented by  \textit{point--pushing} homeomorphisms, defined as follows---see Section 4.2 of \cite{FarbMargalit} for details.
Let $\{x\} = S - \dottedS$, and pick an isomorphism $\pi_1(S) \cong \pi_1(S,x)$.
A loop $h \co [0,1] \to S$ based at $x$ is an isotopy of the inclusion map $\{x\} \to S$.
Such an isotopy may be extended to an ambient isotopy $H \co S \times [0,1] \to S$.
The homeomorphism $H( \, \cdot \, , 1)$ at time one is a homeomorphism of $S$ fixing $x$, which induces a homeomorphism $\mathring{H}$ of $\dottedS$.
We call $H(\, \cdot \, , 1)$ and $\mathring{H}$ \textit{point--pushing homeomorphisms}.
The isotopy class of $\mathring{H}$ depends only on the pointed homotopy class of $h$, and so yields a well defined element of $\Mod(\dottedS)$.
This mapping class acts trivially on homology, and so lies in $\Pure(\dottedS)$.

The Birman exact sequence fits naturally into a commutative diagram 
\[
\begin{tikzpicture}[>=to, line width = .075em, baseline=(current bounding box.center)]
\matrix (m) [matrix of math nodes, row sep=1em,
column sep=2em, text height=1.5ex, text depth=0.25ex]
{
1   & \pi & \Pure(\dottedS) & \Pure(S) &  1 \\
1   & \mathrm{Inn}(\pi)  & \Aut(\pi)  & \Out(\pi) & 1 \\
};
\path[->,font=\scriptsize]
(m-1-1)	edge						(m-1-2)
(m-1-2)	edge						(m-1-3)
		edge						(m-2-2)
(m-1-3)	edge	node[auto] {$F$}		(m-1-4)
		edge						(m-2-3)
(m-1-4)	edge						(m-1-5)
		edge						(m-2-4)
(m-2-1)	edge						(m-2-2)
(m-2-2)	edge						(m-2-3)
(m-2-3)	edge						(m-2-4)
(m-2-4)	edge						(m-2-5)
	;
\end{tikzpicture}
\]
where the vertical maps are injections and the map $\pi \to \mathrm{Inn}(\pi)$ is the natural isomorphism.
A consequence is that every inner automorphism of $\pi_1(S,x)$ is realized by a point--pushing homeomorphism fixing $x$.

\section{Pulling and pushing congruence subgroups}

\begin{lemma}\label{finiteintersections} Finite intersections of congruence subgroups are congruence.
\end{lemma}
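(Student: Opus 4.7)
The plan is to reduce to the case of two congruence subgroups and then induct. So suppose $H_1, H_2 \leq \Mod(S)$ are congruence, meaning each contains a principal congruence subgroup: there exist finite index characteristic subgroups $C_1, C_2 \leq \pi$ such that $H_i$ contains the kernel $K_i$ of the natural map $\Mod(S) \to \Out(\pi/C_i)$. The goal is to exhibit a single finite index characteristic subgroup $C \leq \pi$ whose associated principal congruence subgroup $K$ sits inside $H_1 \cap H_2$.

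The obvious candidate is $C := C_1 \cap C_2$. First, $C$ has finite index in $\pi$ because the intersection of two finite index subgroups does, and $C$ is characteristic in $\pi$ because the intersection of two characteristic subgroups is invariant under every automorphism of $\pi$. Thus $C$ is a legitimate defining subgroup for a principal congruence subgroup, and $K := \ker\bigl(\Mod(S) \to \Out(\pi/C)\bigr)$ is well defined.

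The heart of the argument is then to show $K \subseteq K_1 \cap K_2$. For $i = 1, 2$, the inclusion $C \subseteq C_i$ gives a canonical $\Aut(\pi)$-equivariant quotient $\pi/C \twoheadrightarrow \pi/C_i$, which in turn induces a map $\Out(\pi/C) \to \Out(\pi/C_i)$ compatible with the maps out of $\Mod(S)$. One should check the minor point that an inner automorphism of $\pi/C$ descends to an inner automorphism of $\pi/C_i$ (immediate, since $\Inn$ is functorial for surjective group homomorphisms), so the induced map on $\Out$ is well defined. Chasing the resulting commutative diagram, anything in the kernel of $\Mod(S) \to \Out(\pi/C)$ also lies in the kernel of $\Mod(S) \to \Out(\pi/C_i)$. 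Hence $K \subseteq K_1 \cap K_2 \subseteq H_1 \cap H_2$, so $H_1 \cap H_2$ is congruence.

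For finitely many congruence subgroups $H_1, \dots, H_n$, iterate: by the above $H_1 \cap H_2$ is congruence, then $(H_1 \cap H_2) \cap H_3$ is congruence, and so on. There is no serious obstacle here—the only thing requiring even minor attention is the compatibility of $\Out$ with the quotient $\pi/C \to \pi/C_i$, and this is formal.
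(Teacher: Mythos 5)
Your proof is correct and follows essentially the same route as the paper: reduce to two principal congruence subgroups and show that the map to $\Out(\pi/C_i)$ factors through $\Out(\pi/(C_1\cap C_2))$. The only cosmetic difference is that the paper passes to the characteristic core of $C_1\cap C_2$, which is redundant since, as you note, the intersection of characteristic subgroups is already characteristic.
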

\begin{proof} It suffices to prove that the intersection of two principal congruence subgroups is congruence.

Let $C$ and $D$ be finite index characteristic subgroups of $\pi$, and let 
\[
E = \bigcap_{\varphi \in \Aut(\pi)} \varphi(C \cap D)
\]
be the characteristic core of $C \cap D$.
Our maps to $\Out(\pi/C)$ and $\Out(\pi/D)$ both factor through $\Out(\pi/E)$:
\[
\begin{tikzpicture}[>=to, line width = .075em, baseline=(current bounding box.center)]
\matrix (m) [matrix of math nodes, row sep=2.5em,
column sep=2.5em, text height=1.5ex, text depth=0.25ex]
{ \Pure(S) & \Out(\pi/E) & \Out(\pi/C)   \\
 & & \Out(\pi/D) \\ };
\path[->]
(m-1-1)				edge 			(m-1-2)
					edge [bend left=30] 	(m-1-3)
					edge [bend right=18] 	(m-2-3)
(m-1-2)				edge 			(m-1-3)
(m-1-2.south east)		edge				(m-2-3);
\end{tikzpicture}
\]
and so $\ker\big(\Pure(S) \to \Out(\pi/E)\big)$ lies in the intersection of $\ker\big(\Pure(S) \to \Out(\pi/C)\big)$ and $\ker\big(\Pure(S) \to \Out(\pi/D)\big)$.
\end{proof}

Lemma \ref{finiteintersections} shows that $\calK$ forms an inverse system and allows us to replace $\Mod(S)$ with $\Pure(S)$ when considering the congruence subgroup problem, replacing congruence subgroups of $\Mod(S)$ with their intersections with $\Pure(S)$, called \textbf{congruence subgroups} of $\Pure(S)$.

\begin{lemma}\label{pullcongruence} If $\kappa$ is a congruence subgroup of $\Pure(S)$, then $F^{-1}(\kappa)$ is a congruence subgroup of $\Pure(\dottedS)$.
\end{lemma}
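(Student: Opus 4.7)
The plan is to show that for any $\kappa \in \calK(S)$, the pullback $F^{-1}(\kappa)$ contains a principal congruence subgroup of $\Pure(\dottedS)$. Since $\kappa$ is congruence, fix a finite index characteristic subgroup $C \leq \pi$ with $\ker\bigl(\Pure(S) \to \Out(\pi/C)\bigr) \subseteq \kappa$; it then suffices to manufacture a finite index characteristic subgroup $C' \leq \dottedpi$ such that the composition
\[
\ker\bigl(\Pure(\dottedS) \to \Out(\dottedpi/C')\bigr) \xrightarrow{F} \Pure(S) \to \Out(\pi/C)
\]
is trivial.

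To build $C'$, let $q \co \dottedpi \twoheadrightarrow \pi$ be the ``fill in the puncture'' surjection, whose kernel $N$ is normally generated by a peripheral loop around the missing point. Set $\tilde{C} := q^{-1}(C)$, a finite index subgroup of $\dottedpi$ which is typically not characteristic, and take $C'$ to be its characteristic core
\[
C' = \bigcap_{\varphi \in \Aut(\dottedpi)} \varphi(\tilde{C}).
\]
Since $\dottedpi$ is finitely generated, $C'$ is a finite index characteristic subgroup of $\dottedpi$ contained in $\tilde{C}$.

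Now, any $f \in \Pure(\dottedS)$ admits an $\Aut(\dottedpi)$ representative $\varphi$ that preserves $N$ (mapping classes of $\dottedS$ respect the peripheral structure), whose descent to $\dottedpi/N \cong \pi$ is a representative $\bar{\varphi}$ of $F(f) \in \Out(\pi)$. Because $C$ is characteristic in $\pi$, $\bar{\varphi}$ preserves $C$, so $\varphi$ preserves $\tilde{C}$. Hence the $f$-action on $\dottedpi/C'$ descends along $\dottedpi/C' \twoheadrightarrow \dottedpi/\tilde{C} \cong \pi/C$ to the $F(f)$-action. If $f$ lies in $\ker\bigl(\Pure(\dottedS) \to \Out(\dottedpi/C')\bigr)$, then $\varphi$ acts on $\dottedpi/C'$ as conjugation by some $[g]$, whence $\bar{\varphi}$ acts on $\pi/C$ as conjugation by $[q(g)]$. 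Therefore $F(f) \in \ker\bigl(\Pure(S) \to \Out(\pi/C)\bigr) \subseteq \kappa$, as required.

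The one genuinely delicate point is the bookkeeping that the $\Pure(\dottedS)$-action on $\dottedpi$ really does descend to the $\Pure(S)$-action on $\pi$ along $F$ and $q$—precisely the content of the commutative diagram at the end of Section 2. Once that compatibility is invoked, the argument is essentially formal.
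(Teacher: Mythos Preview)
Your proof is correct and follows essentially the same approach as the paper: both pull back the characteristic subgroup $C \leq \pi$ along the puncture-filling map $\dottedpi \to \pi$ to obtain a $\Pure(\dottedS)$-invariant (conjugacy class of) finite index subgroup, then verify that the resulting representation to the outer automorphism group of the quotient factors through $\Out(\pi/C)$ via $F$. The only cosmetic difference is that you pass to the characteristic core of $q^{-1}(C)$ at the outset, whereas the paper works directly with $\iota_*^{-1}(D)$ and relegates the characteristic-core step to a footnote.
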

\noindent 
A loop on a Riemann surface of finite type is \textit{peripheral} if it is freely homotopic into a subsurface that is conformally equivalent to a punctured disk.
An element of the fundamental group is \textit{peripheral} if its representatives are.
Note that we consider a null--homotopic loop to be peripheral.
Letting $\iota \co \dottedS \to S$ be the inclusion map, we have the following short exact sequence
\begin{equation}
\begin{tikzpicture}[>=to, line width = .075em, baseline=(current bounding box.center)]
\matrix (m) [matrix of math nodes, column sep=1.5em, row sep = 1em, text height=1.5ex, text depth=0.25ex]
{
1  & \calN & \dottedpi & \pi &  1 \\
};
\path[->, font=\scriptsize]
(m-1-1) edge 			(m-1-2)
(m-1-2) edge 			(m-1-3)
(m-1-3) edge node[auto] {$\iota_*$}	(m-1-4)
(m-1-4) edge 			(m-1-5)
	;
\end{tikzpicture}
\end{equation}
where $\calN$ is the normal closure of a peripheral element corresponding to the distinguished puncture of $\dottedS$ (the puncture located at $\{x\}=S - \dottedS$).
\begin{proof}[Proof of Lemma \ref{pullcongruence}]
Pick a point $z$ on $\dottedS$.
Identify $\dottedpi$ with $\pi_1(\dottedS, z)$ and $\pi$ with $\pi_1(S, z)$.

Let $D$ be a characteristic subgroup of $\pi$ and let $\kappa$ be the associated principal congruence subgroup.

Let $\varphi$ lie in $\Pure(\dottedS)$ and let $H \co S \to S$ be a homeomorphism fixing $x$ and $z$ such that $h = H\big|_{\dottedS}$ represents $\varphi$.
Let $c$ lie in $\iota_*^{-1}(D)$.
We have $\iota_*h_*(c) = H_*\iota_*(c)$.
Since $D$ is characteristic, $H_*\iota_*(c)$ lies in $D$, and so $h_*(c)$ lies in $\iota_*^{-1}(D)$.
Therefore $\iota_*^{-1}(D)$ is $h_*$--invariant, and so the conjugacy class of $\iota_*^{-1}(D)$ is $\Pure(\dottedS)$--invariant.
We thus have a representation
\[
\rho\co \Pure(\dottedS) \to \Out\big(\dottedpi/\iota_*^{-1}(D)\big)
\]
that fits into a commuting diagram:
\[
\begin{tikzpicture}[>=to, line width = .075em, baseline=(current bounding box.center), descr/.style={fill=white,inner sep=2.5pt}]
\matrix (m) [matrix of math nodes, row sep=2.5em,
column sep=2.5em, text height=1.5ex, text depth=0.25ex]
{ & F^{-1}(\kappa) & \Pure(\dottedS)& \Out\big(\dottedpi/\iota_*^{-1}(D)\big)\\
1  & \kappa & \Pure(S) & \Out\big(\pi/D\big) \\
};
\path[->,font=\scriptsize]
(m-1-2)	edge						(m-1-3)
		edge						(m-2-2)
(m-1-3)	edge node[auto] {$\rho$}		(m-1-4)
		edge node[auto] {$F$}		(m-2-3)
(m-1-4)	edge						(m-2-4)
(m-2-1)	edge						(m-2-2)
(m-2-2)	edge						(m-2-3)
(m-2-3)	edge						(m-2-4)		
		;
\end{tikzpicture}
\]
Exactness of the bottom row yields $F^{-1}(\kappa) \supset \ker \rho$.
So $F^{-1}(\kappa)$ is congruence.\footnote{While the conjugacy class of $\iota_*^{-1}(D)$ is $\Pure(\dottedS)$--invariant, it is not necessarily characteristic.  
To see that this is not an issue, consider the characteristic core $\frakD$ of $\iota_*^{-1}(D)$.
Our kernel $\ker \rho  \subset F^{-1}(\kappa)$ contains the kernel of the representation $\Pure(\dottedS) \to \Out\big(\dottedpi/\frakD)$, which is a principal congruence subgroup in the usual sense.}
\end{proof}

If $M$ and $N$ are subgroups of a group $G$, we let
\[
MN = M \cdot N =  \{ mn \ | \ m \in M \mbox{\ and\ } n \in N \}.
\]
If $G$ acts on a set $X$, our notation for stabilizers becomes ambiguous when applied to products $MN \subset G$.
To remedy this, we adopt the convention that, whenever we write $MN_x$, we always mean $MN_x = M\cdot (N_x)$, and not $(MN)_x$.

\begin{theorem}\label{product} 
Let $S$ be a smooth affine curve, and let $\gamma$ be a nonperipheral simple loop in $\dottedS$ that is peripheral in $S$.
Let $\kappa$ be a congruence subgroup of $\Pure(\dottedS)$.
Let $\calB^\kappa= \kappa \cap \calB$ and $\kappa_\gamma = \kappa \cap \Pure(\dottedS)_\gamma$.
Then $\calB^\kappa \kappa_\gamma = \calB^\kappa \cdot (\kappa_\gamma)$ is a congruence subgroup.
\end{theorem}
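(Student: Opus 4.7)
My plan is to reduce to showing $F(\kappa_\gamma)\subset\Pure(S)$ is a congruence subgroup, and then to exhibit such a subgroup using the action of $\Pure(S)$ on $\pi$.

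First, since $\calB=\ker F$ is normal in $\Pure(\dottedS)$, the subgroup $\calB^\kappa$ is normal in $\kappa$ and a direct check gives
\[
\calB^\kappa\kappa_\gamma \;=\; \kappa\cap F^{-1}\!\bigl(F(\kappa_\gamma)\bigr).
\]
By Lemmas~\ref{finiteintersections} and~\ref{pullcongruence}, it thus suffices to show that $F(\kappa_\gamma)$ is a congruence subgroup of $\Pure(S)$.

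Second, I would describe $F(\kappa_\gamma)$ geometrically. Let $p$ denote the puncture of $S$ enclosed by $\gamma$, and let $\calP_0\subset\calP$ be the set of simple loops in $\dottedS$ bounding a twice-punctured disk around $\{x,p\}$. An isotopy argument in $S$---fix $p$, carry $\gamma$ to $\gamma'$, and modify the isotopy so the track of $x$ closes into a loop $h\in\pi_1(S,x)$ realizing $\tau(h)(\gamma)=\gamma'$---shows $\calB$ acts transitively on $\calP_0$ with stabilizer $\calB_\gamma=\langle T_\gamma\rangle=\tau(\langle\ell\rangle)$, where $\ell\in\pi$ is a peripheral loop at $p$. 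Hence $\calP_0\cong\pi/\langle\ell\rangle$ as $\calB$-sets. Writing $\kappa_\pi=\tau^{-1}(\calB^\kappa)\triangleleft\pi$ and $H=\kappa_\pi\langle\ell\rangle$, the $\calB^\kappa$-orbits in $\calP_0$ biject with cosets in $\pi/H$, and the first paragraph identifies $F(\kappa_\gamma)$ as the stabilizer in $F(\kappa)$ of the trivial coset under the $F(\kappa)$-action on $\pi/H$.

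Third, I would produce a principal congruence subgroup of $\Pure(S)$ sitting inside $F(\kappa_\gamma)$. The commutative diagram of the Point-Pushing section identifies $\calB\subset\Pure(\dottedS)$ with $\mathrm{Inn}(\pi)\subset\Aut(\pi)$ via the natural isomorphism $\calB\cong\pi$, and a short calculation in this diagram yields the identity $k_\phi\equiv c_\phi\pmod{\langle\ell\rangle}$. Here, for a lift $\phi\in\kappa$ of $\bar\phi\in F(\kappa)$, $k_\phi\in\pi$ is any element with $\phi(\gamma)=\tau(k_\phi)(\gamma)$, and $c_\phi\in\pi$ is any element with $\alpha_\phi(\ell)=c_\phi\ell c_\phi^{-1}$, where $\alpha_\phi\in\Aut(\pi)$ is the action of $\phi$ on $\pi$. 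Since $\pi$ is a free or surface group and $\ell$ is primitive with centralizer $\langle\ell\rangle$, residual finiteness supplies a characteristic finite-index subgroup $C\subset\kappa_\pi$ of $\pi$ for which the centralizer of $\bar\ell$ in $\pi/C$ is still $\langle\bar\ell\rangle$. Set $\mu=\ker\bigl(\Pure(S)\to\Out(\pi/C)\bigr)$. For $\bar\phi\in\mu\cap F(\kappa)$ one modifies the lift $\phi\in\kappa$ by an element of $\calB^\kappa$ so as to arrange $\alpha_\phi\equiv\mathrm{id}\pmod C$, at which point the relation $\alpha_\phi(\ell)\equiv\ell\pmod C$ combined with the centralizer hypothesis on $C$ forces $c_\phi\in\langle\ell\rangle\cdot C\subset H$, so $k_\phi\in H$ and $\bar\phi\in F(\kappa_\gamma)$.

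The main obstacle is twofold: establishing the identity $k_\phi\equiv c_\phi\pmod{\langle\ell\rangle}$ via a careful diagram chase through the Birman sequence, and verifying that the lift $\phi$ can indeed be adjusted to satisfy $\alpha_\phi\equiv\mathrm{id}\pmod C$---the $\calB^\kappa$-freedom in the lift corresponds to inner automorphisms by elements of $\kappa_\pi$, so one may need to refine $C$ further to absorb residual obstructions in $\pi/\kappa_\pi$.
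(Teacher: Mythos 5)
Your opening reduction is exactly right and coincides with the paper's: $\calB^\kappa\kappa_\gamma=\kappa\cap F^{-1}\bigl(F(\kappa_\gamma)\bigr)$, so everything comes down to showing that $F(\kappa_\gamma)$ contains a congruence subgroup of $\Pure(S)$ (this is Lemma~\ref{BKlemma} together with the Claim that becomes Scholium~\ref{pushcongruence}). The gap is in your third step, and it is twofold. First, your argument only treats classes $\bar\phi$ in $\mu\cap F(\kappa)$: you need a lift of $\bar\phi$ lying in $\kappa$ before $k_\phi$ can be compared with $\kappa_\pi$. But to conclude that $F(\kappa_\gamma)$ is congruence you would need $\mu\cap F(\kappa)$ to be congruence, i.e.\ you would need the finite-index subgroup $F(\kappa)\subset\Pure(S)$ to contain a congruence subgroup. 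That is not available---Lemma~\ref{pullcongruence} lets you pull congruence subgroups back along $F$, not push images of congruence subgroups forward---and it is essentially of the same nature as the statement being proved. Second, even for $\bar\phi\in\mu\cap F(\kappa)$, membership in $\mu$ only gives $\alpha_\phi\equiv\mathrm{inn}_w\pmod{C}$ for some $w\in\pi$; normalizing to $\alpha_\phi\equiv\mathrm{id}$ means replacing $\phi$ by $\tau(w)^{-1}\phi$, which stays in $\kappa$ only if $w\in\kappa_\pi$. If $w\notin H$, your own computation yields $c_\phi\in wH$ and hence $k_\phi\in wH\neq H$, so $\bar\phi\notin F(\kappa_\gamma)$; shrinking $C$ shrinks $\mu$ but puts no constraint on the coset $wH$, so ``refining $C$'' does not absorb this obstruction. (Two smaller points: the existence of a finite quotient $\pi/C$ in which the centralizer of $\bar\ell$ lies in $H/C$ is true, but it does not follow from residual finiteness alone---one needs $C_{\widehat{\pi}}(\ell)=\overline{\langle\ell\rangle}$ (Nakamura) together with a compactness argument; and $\kappa_\pi$ need not be normal in $\pi$ unless you first shrink $\kappa$ to a principal congruence subgroup.)

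The paper sidesteps both difficulties by never starting from an arbitrary lift inside $\kappa$. For $\psi$ in the congruence subgroup $\mathfrak{D}=\ker\bigl(\Pure(S)\to\Out(\pi/\iota_*(D))\bigr)$, with $D=C\cap\pi_1(\Sigma,z)$ for the complementary subsurface $\Sigma$ bounded by $\gamma$, it constructs a representative homeomorphism that is the identity off $\iota(\Sigma)$ and acts trivially mod $\iota_*(D)$, restricts it to obtain a lift $\varphi\in\Pure(\dottedS)_\gamma$, and then uses the normal form in $\dottedpi\cong A*_{\ZZ}B$ (with $A=\pi_1\bigl(\overline{\dottedS-\Sigma}\bigr)$ and $B=\pi_1(\Sigma)$) to verify directly that $\varphi$ acts trivially on $\dottedpi/C$ and hence lies in the principal congruence subgroup $\mathfrak{p}\subset\kappa$. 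That computation---being the identity on the $A$-factor and trivial mod $D\subset C$ on the $B$-factor forces triviality mod $C$ on all of $\dottedpi$---is the idea your proposal is missing; with it, the lifting and normalization problems above never arise.
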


As $\calB^\kappa \kappa_\gamma \subset \kappa$, the following corollary is immediate.
\begin{corollary}\label{CofinalCorollary} For $S$ a smooth affine curve, the inverse system of subgroups
\[
\{\calB^\kappa \kappa_\gamma \ \big| \ \kappa \ \mathrm{in}\  \calK(\dottedS) \} 
\]
is cofinal in $\calK(\dottedS)$. \qed
\end{corollary}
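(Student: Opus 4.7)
The plan is to realize $\calB^\kappa\kappa_\gamma$ as the intersection of two congruence subgroups and then invoke Lemmas \ref{finiteintersections} and \ref{pullcongruence}. I would first reduce to the case that $\kappa$ is a principal congruence subgroup, since every congruence subgroup contains a principal one and $\calB^{\kappa'}\kappa'_\gamma \subset \calB^\kappa\kappa_\gamma$ whenever $\kappa' \subset \kappa$. For principal $\kappa$, the subgroup $\calB^\kappa = \calB\cap\kappa$ is normal in $\Pure(\dottedS)$, so $\calB^\kappa\kappa_\gamma$ is genuinely a subgroup.

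The crux is the identity
\[
\calB^\kappa\kappa_\gamma \ = \ \kappa \cap F^{-1}\bigl(F(\kappa_\gamma)\bigr).
\]
The inclusion $\subset$ is immediate from $F(\calB)=1$. Conversely, if $\varphi\in\kappa$ and $F(\varphi)=F(\phi)$ for some $\phi\in\kappa_\gamma$, then $\varphi\phi^{-1}\in\calB\cap\kappa = \calB^\kappa$, so $\varphi = (\varphi\phi^{-1})\phi \in \calB^\kappa\kappa_\gamma$. By Lemmas \ref{pullcongruence} and \ref{finiteintersections}, the theorem then reduces to showing that $F(\kappa_\gamma)$ is a congruence subgroup of $\Pure(S)$.

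The peripherality of $\gamma$ in $S$ is essential for this last step: the disk bounded by $\gamma$ in $S$ contains $\{x\}$ together with exactly one puncture $y$ of $S$, and any two nonperipheral simple loops in $\dottedS$ bounding a disk in $S$ around $\{x\}$ and $y$ differ by a point-push. Consequently $F\vert_{\Pure(\dottedS)_\gamma}$ is surjective onto $\Pure(S)$ with cyclic kernel $\calB_\gamma = \tau(\langle\alpha_y\rangle)\cong\ZZ$ (where $\alpha_y\in\pi$ is peripheral about $y$), and $F(\kappa_\gamma)$ is identified with $\kappa_\gamma\calB_\gamma/\calB_\gamma \subset \Pure(S)$. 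To prove this is congruence, I would take the characteristic core $E\lhd\pi$ of $\iota_*(C)$, where $C$ is the characteristic subgroup of $\dottedpi$ defining $\kappa$, and attempt to verify $\Pure(S,E)\subset F(\kappa_\gamma)$: given $\psi\in\Pure(S,E)$, lift to $\tilde\psi_0 \in \Pure(\dottedS)_\gamma$ and correct by an element of $\calB_\gamma$ to land in $\kappa$.

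The main obstacle is this last correction step: it amounts to showing that, for appropriately chosen $E$, the class of $\tilde\psi_0$ in the finite group $\Pure(\dottedS)/\kappa$ already lies in the image of the cyclic group $\calB_\gamma$. This is precisely where the affineness of $S$, which supplies the puncture $y$ and hence the nontrivial cyclic stabilizer $\calB_\gamma$, must be used to provide the room needed to absorb the discrepancy between the lift and the defining characteristic subgroup of $\kappa$.
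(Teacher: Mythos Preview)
Your overall reduction is correct and coincides with the paper's: the identity $\calB^\kappa\kappa_\gamma = \kappa \cap F^{-1}\bigl(F(\kappa_\gamma)\bigr) = \kappa \cap \calB\kappa_\gamma$ is exactly what the paper proves in Theorem~\ref{product}, so everything hinges on showing that $F(\kappa_\gamma)$ is congruence in $\Pure(S)$---the content of Lemma~\ref{BKlemma} and Scholium~\ref{pushcongruence}.

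The gap is in your attack on this last step. The witness you propose, $E =$ characteristic core of $\iota_*(C)$, is the wrong subgroup: since $\iota_*$ annihilates the large normal subgroup $\calN$, the image $\iota_*(C)$ remembers nothing about how $C$ meets $\calN$, and triviality of $\psi$ on $\pi/E$ gives no control over how a lift $\tilde\psi_0$ acts on $\dottedpi/C$. Correcting by the cyclic group $\calB_\gamma$ cannot compensate, as it only moves $\tilde\psi_0$ through a single $\ZZ$--orbit in $\Pure(\dottedS)/\kappa$. The paper uses instead the subsurface $\Sigma$ on the \emph{other} side of $\gamma$, for which $j=\iota_*|_{\pi_1(\Sigma)}\colon\pi_1(\Sigma)\to\pi$ is an isomorphism. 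Setting $D = C\cap\pi_1(\Sigma)$ and $\mathfrak{D} = \ker\bigl(\Pure(S)\to\Out(\pi/j(D))\bigr)$, one lifts $\psi\in\mathfrak{D}$ to a homeomorphism supported on $\Sigma$ (hence automatically fixing $\gamma$), adjusts by inner automorphisms so that it acts trivially on $\pi_1(\Sigma)/D$, and then checks via the normal form in $\dottedpi \cong \pi_1\bigl(\overline{\dottedS-\Sigma}\bigr) *_{\ZZ} \pi_1(\Sigma)$ that the lift already lies in $\kappa_\gamma$---no correction by $\calB_\gamma$ is needed.
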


\begin{lemma}\label{BKlemma} 
Let $S$ be a smooth affine curve, and let $\gamma$ be a nonperipheral simple loop in $\dottedS$ that is peripheral in $S$.
Let $\kappa$ be a congruence subgroup of $\Pure(\dottedS)$.
Then $\calB \kappa_\gamma$ is a congruence subgroup of $\Pure(\dottedS)$.
\end{lemma}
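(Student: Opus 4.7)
The plan is to apply Lemma \ref{pullcongruence} after rewriting $\calB \kappa_\gamma$ as a pullback.

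First, observe that $\calB = \ker F$ yields the identity $\calB \kappa_\gamma = F^{-1}(F(\kappa_\gamma))$: the inclusion $\subseteq$ is immediate, and for the reverse, if $\varphi \in F^{-1}(F(\kappa_\gamma))$ and $k \in \kappa_\gamma$ satisfy $F(\varphi) = F(k)$, then $\varphi k^{-1} \in \calB$ and $\varphi = (\varphi k^{-1}) k \in \calB \kappa_\gamma$. By Lemma \ref{pullcongruence}, it therefore suffices to show that $F(\kappa_\gamma)$ is a congruence subgroup of $\Pure(S)$.

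The key topological input is the surjectivity $F \co \Pure(\dottedS)_\gamma \twoheadrightarrow \Pure(S)$, which follows from $\gamma$ being peripheral in $S$. Indeed, $\gamma$ bounds a once-punctured disk in $S$ containing a single puncture $q$, and since $\gamma$ is nonperipheral in $\dottedS$, the point $x$ must lie in this disk. Any $\varphi \in \Pure(S)$ can be represented by a homeomorphism of $S$ that is the identity on a small neighborhood of $q$ containing $x$ and bounded (up to isotopy) by $\gamma$; such a homeomorphism restricts to a homeomorphism of $\dottedS$ fixing $\gamma$, providing a lift in $\Pure(\dottedS)_\gamma$.

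To show $F(\kappa_\gamma)$ is congruence, write $\kappa = \ker\bigl(\Pure(\dottedS) \to \Out(\dottedpi/D)\bigr)$ for some characteristic finite-index $D \subset \dottedpi$. Since $\calN$ is $\Pure(\dottedS)$-invariant (its generator $r$ is peripheral), the image $\iota_*(D) = D\calN/\calN \subset \pi$ is $\Pure(S)$-invariant of finite index; let $C \subset \pi$ be its characteristic core. I would aim to show that the principal congruence subgroup $\lambda := \ker\bigl(\Pure(S) \to \Out(\pi/C)\bigr)$ is contained in $F(\kappa_\gamma)$. Given $\varphi \in \lambda$, take a lift $\tilde\varphi \in \Pure(\dottedS)_\gamma$ produced above. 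The choice of $C$ ensures that the action of $\tilde\varphi$ on $\dottedpi/D$ descends to the identity on $\dottedpi/(D\calN)$, so the residual action lies in the image of $\calN$. One then modifies $\tilde\varphi$ by an element of $\calB \cap \Pure(\dottedS)_\gamma$ to absorb this residual twist, producing the desired element of $\kappa \cap \Pure(\dottedS)_\gamma$ lifting $\varphi$.

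The main obstacle is this final modification step: one must verify that $\calB \cap \Pure(\dottedS)_\gamma$ acts on $\dottedpi/D$ with enough range to cancel any $\calN$-valued residue. Here the affine hypothesis is essential. Since $\pi$ is free, centralizers are cyclic, and $\calB \cap \Pure(\dottedS)_\gamma$ admits an explicit description (the cyclic group generated by the Dehn twist $T_\gamma$, which equals the point-push of the peripheral generator at $q$); combined with the free structure of $\dottedpi$ and the fact that $\calN$ is the normal closure of a single peripheral element, this provides the algebraic control needed to carry out the cancellation, if necessary after replacing $D$ with a suitably smaller characteristic finite-index subgroup of $\dottedpi$ (which only shrinks $\kappa$ and so still yields a congruence subgroup inside $F(\kappa_\gamma)$).
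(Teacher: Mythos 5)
Your overall strategy matches the paper's: you reduce via $\calB\kappa_\gamma = F^{-1}(F(\kappa_\gamma))$ and Lemma \ref{pullcongruence} to showing that $F(\kappa_\gamma)$ is congruence, and you produce lifts in $\Pure(\dottedS)_\gamma$ by choosing representatives that are the identity on the disk about the puncture $q$ encircled by $\gamma$. But the heart of the argument --- exhibiting a principal congruence subgroup of $\Pure(S)$ inside $F(\kappa_\gamma)$ --- has a genuine gap, and it is exactly the step you flag as ``the main obstacle.'' The problem originates in your choice of the subgroup of $\pi$ defining $\lambda$: you push $D$ forward along the \emph{quotient} $\iota_* \co \dottedpi \to \pi$, getting $\iota_*(D) = D\calN/\calN$. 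Triviality of $\varphi$ on $\pi/C$ with $C \subset \iota_*(D)$ then only tells you that your lift $\tilde\varphi$ moves each $w \in \dottedpi$ within its $D\calN$--coset, i.e.\ $\tilde\varphi_*(w) = w\,n_w$ with $n_w$ ranging over the image of $\calN$ in $\dottedpi/D$, which is in general a large nonabelian normal subgroup. The assignment $w \mapsto n_w$ is a crossed homomorphism into that subgroup, whereas $\calB \cap \Pure(\dottedS)_\gamma = \langle T_\gamma\rangle$ is a single cyclic group: multiplying by powers of $T_\gamma$ sweeps out only a one--parameter family of corrections, so there is no reason the residual can be absorbed; and replacing $D$ by a smaller characteristic subgroup shrinks $\kappa$ and $\lambda$ in tandem without changing this mismatch. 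The closing appeal to freeness of $\pi$ and cyclic centralizers does not supply the needed control.

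The paper's proof avoids this entirely by never passing through the quotient $\iota_*$. Since $\gamma$ cuts off a pair of pants in $\dottedS$, its complementary subsurface $\Sigma$ has $j = \iota_*\big|_{\pi_1(\Sigma)} \co \pi_1(\Sigma) \to \pi$ an \emph{isomorphism}, and the congruence subgroup of $\Pure(S)$ is defined using $\iota_*(C \cap \pi_1(\Sigma)) = j(C\cap\pi_1(\Sigma))$ --- an intersection transported by $j$, which is in general strictly smaller than your $\iota_*(C)$. Writing $\dottedpi = A *_{\ZZ} B$ with $B = \pi_1(\Sigma)$, a lift supported on $\Sigma$ fixes $A$ elementwise and acts on $B$ exactly as $\varphi$ acts on $\pi$ via $j$; the normal form $a_1b_1\cdots a_nb_n$ then shows the lift is trivial on $\dottedpi/C$ outright, with no residual to cancel. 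That use of the isomorphism $j$ and the amalgamated product structure is the missing idea; with it, your first two paragraphs assemble into a correct proof.
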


\begin{proof}
As $\calB$ is normal in $\Pure(\dottedS)$, the set $\calB\mathfrak{a}_\gamma$ is a subgroup for any $\mathfrak{a}$ in $\calA$. 

Let $\kappa$ be a congruence subgroup containing a principal congruence subgroup 
\[
\mathfrak{p} = \ker\left( \Pure(\dottedS) \to \Out(\dottedpi/C)\right)
\]
for some characteristic subgroup $C$ of $\dottedpi$.

Let $\iota \co \dottedS \to S$ be the inclusion map.
By our choice of $\gamma$, there is a $\pi_1$--injective subsurface $\Sigma \subset \dottedS$ with $\partial \Sigma = \gamma$ such that 
\[
j = \iota_*\big|_{\pi_1(\Sigma)} \co \pi_1(\Sigma) \longrightarrow \pi
\]
is an isomorphism.
Here we choose a  basepoint $z$ in $\gamma$ and identify $\dottedpi$ and $\pi$ with $\pi_1(\dottedS, z)$ and $\pi_1(S, z)$ respectively.

We lift $\Pure(\dottedS)_\gamma$ to $\Aut(\dottedpi)$ by identifying the former with the group of homeomorphisms of $\dottedS$ that are the identity on $\overline{\dottedS - \Sigma}$ up to isotopy relative to $\overline{\dottedS - \Sigma}$.
Since $C$ is characteristic and $\pi_1(\Sigma,z)$ is $\Pure(\dottedS)_\gamma$--invariant, the subgroup $D = C \cap \pi_1(\Sigma, z)$ is $\Pure(\dottedS)_\gamma$--invariant.
It follows that the conjugacy class of $\iota_*(D)$ is $\Pure(S)$--invariant, and we have an exact sequence 
\[
\begin{tikzpicture}[>=to, line width = .075em, baseline=(current bounding box.center)]
\matrix (m) [matrix of math nodes, column sep=1.5em, row sep = 1em, text height=1.5ex, text depth=0.25ex]
{
1  & \mathfrak{D} & \Pure(S) & \Out \big(\pi/\iota_*(D)\big). \\
};
\path[->,font=\scriptsize]
(m-1-1) edge 			(m-1-2)
(m-1-2) edge 			(m-1-3)
(m-1-3) edge 			(m-1-4)
	;
\end{tikzpicture}
\]
The subgroup $\mathfrak{D}$ is a $\Pure(S)$--congruence subgroup, and we have the following claim.
\begin{claim}
$
\mathfrak{D} \subset F(\mathfrak{p}_\gamma).
$
\end{claim}
\begin{proof}[Proof of claim]
Let $\psi$ be an element of $\mathfrak{D}$.
Choose a homeomorphism $h \co S \to S$ that fixes $z$ and represents $\psi$.
As mentioned above, the conjugacy class of $\iota_*(D)$ is $\Pure(S)$--invariant.
By postcomposing with a point--pushing homeomorphism fixing $z$, we may assume that $h$ represents an automorphism  $h_*$ of $\pi_1(S,z)$ that preserves $\iota_*(D)$.
Since $\psi$ is an element of $\mathfrak{D}$, this automorphism $h_*$ descends to an inner automorphism of $\pi_1(S,z)/\iota_*(D)$.
By further postcomposing with a point--pushing homeomorphism fixing $z$, we may, and do, assume that $h_*$ in fact represents the trivial automorphism of $\pi_1(S,z)/\iota_*(D)$.
We further assume that $h$ is the identity on $S - \iota(\Sigma)$, which may be achieved by replacing $h$ with a map isotopic to $h$ relative to $z$.

Restricting $h$ to $\iota(\dottedS)$ yields a homeomorphism $H \co \dottedS \to \dottedS$ that fixes $\gamma$ pointwise, and hence represents an element $\varphi$ of $\Pure(\dottedS)_\gamma$ that maps to $\psi$ under the forgetful map $F \co \Pure(\dottedS) \to \Pure(S)$.

Now, 
\[
\pi_1(\dottedS, z) \cong A *_\ZZ B
\]
where $A$ is the free group $\smash{\pi_1\big(\overline{\dottedS - \Sigma}, z\big)}$, $B = \pi_1(\Sigma,z)$, and the amalgamating $\ZZ$ is $\pi_1(\gamma, z)$.
Each element of $\pi_1(\dottedS, z)$ may then be written in a normal form
\[
a_1 b_1 \cdots a_n b_n
\]
where each $a_i$ lies in $A$ and each $b_i$ lies in $B$.
The homeomorphism $H$ induces an automorphism $H_*$ of $\pi_1(\dottedS, z)$, and since $H$ is the identity on $\overline{\dottedS - \Sigma}$, we have
\[
H_*(a_1 b_1 \cdots a_n b_n) = a_1 H_*(b_1) \cdots a_n H_*(b_n).
\]
Now, by construction, $j H_* \big|_B j^{-1}= h_*$, and so $H_* \big|_B$ is trivial in $\Aut(B/ D)$.
So, for each $b$ in $B$, there is a $d$ in $D$ such that $H_*(b) = bd$.
So, for each $i$ there is a $d_i$ in $D$ such that 
\begin{align*}
H_*(a_1 b_1 \cdots a_n b_n) 
& = a_1 H_*(b_1) \cdots a_n H_*(b_n)\\
& = a_1 (b_1 d_1) \cdots a_n (b_n d_n).
\end{align*}
Since $D \subset C$, we conclude that the automorphism $H_*$ induces the trivial automorphism of $\pi_1(\dottedS, z) / C$.
But this means that $\varphi$ lies in $\mathfrak{p}$, and as it also lies in $\Pure(\dottedS)_\gamma$, it lies in $\mathfrak{p}_\gamma$.

Since $F(\varphi) = \psi$, we conclude that $\psi$ is in $F(\mathfrak{p}_\gamma)$.
Since $\psi$ was arbitrary, we conclude that $\mathfrak{D} \subset F(\mathfrak{p}_\gamma)$.
\end{proof}

Since $\mathfrak{D}$ is congruence, and 
\[
\calB \kappa_\gamma 
\supset \calB \mathfrak{p}_\gamma
= F^{-1}(F(\mathfrak{p}_\gamma)) 
\supset F^{-1}(\mathfrak{D}),
\]
the subgroup $\calB \kappa_\gamma$ is congruence by Lemma \ref{pullcongruence}.
\end{proof}

The claim in the proof of Lemma \ref{BKlemma} gives us the following.
\begin{scholium}\label{pushcongruence} If $\kappa$ is a congruence subgroup of $\Pure(\dottedS)$, then $F(\kappa_\gamma)$ is a congruence subgroup of $\Pure(S)$. \qed
\end{scholium}

\begin{proof}[Proof of Theorem \ref{product}]
The set $\calB^\kappa \kappa_\gamma$ lies in $\kappa$, and hence in $\kappa \cap \calB \kappa_\gamma$.
Note that, as $\calB^\kappa$ is not necessarily normal in $\Pure(\dottedS)$, it is not clear that $\calB^\kappa \kappa_\gamma$ is a subgroup of $\Pure(\dottedS)$---we are not requiring $\kappa$ to be normal.
We claim that $\calB^\kappa \kappa_\gamma = \kappa \cap \calB \kappa_\gamma$, which will establish that $\calB^\kappa \kappa_\gamma$ is in fact a subgroup, and, by Lemma \ref{BKlemma}, a congruence one.
To see this, let $b$ in $\calB$ and $k$ in $\kappa_\gamma$ be such that $bk$ lies in $\kappa$.
Since $\kappa_\gamma$ lies in $\kappa$, the element $b$ must lie in $\kappa$.
So $b$ lies in $\calB^\kappa = \kappa \cap \calB$.
\end{proof}

\section{The congruence topology on the Birman kernel}\label{BirmanInjects}
The group $\Pure(\dottedS)$ not only embeds in $\Out(\dottedpi)$, but also as a subgroup of $\Aut(\pi)$.
The first embedding gives rise to the geometric completion $\checkGamma(\dottedS)$, by taking the closure of $\Pure(\dottedS)$ in $\Out(\widehat{\dottedpi})$.
The second embedding gives us a completion $\widetilde \Pure(\dottedS)$, by taking the closure in $\Aut(\widehat{\pi})$. 

By Proposition 3 of \cite{Anderson.1974}, if $G$ is any finitely generated group, the kernel of $\widehat{G} \to \Aut(\widehat{G})$ lies in the center of $\widehat{G}$.
Now, the center of $\widehat{\pi}$ is trivial---thanks to a theorem of Anderson, Proposition 18 of \cite{Anderson.1974}; and, independently, Nakamura,  Corollary 1.3.4 of \cite{Nakamura}.
So there is a natural short exact sequence 
\[
\begin{tikzpicture}[>=to, line width = .075em, baseline=(current bounding box.center)]
\matrix (m) [matrix of math nodes, column sep=1.5em, row sep = 1em, text height=1.5ex, text depth=0.25ex]
{
1 & \widehat{\pi} & \Aut(\widehat{\pi}) & \Out(\widehat{\pi}) & 1 .\\
};
\path[->,font=\scriptsize]
(m-1-1) edge 										(m-1-2)
(m-1-2) edge 		node[auto] {$\widetilde{\tau}$}		(m-1-3)
(m-1-3) edge 										(m-1-4)
(m-1-4) edge										(m-1-5)
	;
\end{tikzpicture}
\]
This gives us a short exact sequence
\begin{equation}\label{TwiddleSequence}
\begin{tikzpicture}[>=to, line width = .075em, baseline=(current bounding box.center)]
\matrix (m) [matrix of math nodes, column sep=1.5em, row sep = 1em, text height=1.5ex, text depth=0.25ex]
{
1 & \widehat{\pi} & \widetilde{\Pure}(\dottedS) & \checkGamma(S) & 1 . \\
};
\path[->,font=\scriptsize]
(m-1-1) edge 									(m-1-2)
(m-1-2) edge 		node[auto] {$\widetilde{\tau}$}	(m-1-3)
(m-1-3) edge 									(m-1-4)
(m-1-4) edge									(m-1-5)
	;
\end{tikzpicture}
\end{equation}
When $S$ is affine, it is a theorem of Matsumoto that $\smash{\widetilde \Pure(\dottedS) \cong \checkGamma(\dottedS)}$, and hence that there is a short exact sequence
\[
\begin{tikzpicture}[>=to, line width = .075em, baseline=(current bounding box.center)]
\matrix (m) [matrix of math nodes, column sep=1.5em, row sep = 1em, text height=1.5ex, text depth=0.25ex]
{
1 & \widehat{\pi} & \checkGamma(\dottedS) & \checkGamma(S) & 1 . \\
};
\path[->,font=\scriptsize]
(m-1-1) edge 			(m-1-2)
(m-1-2) edge 			(m-1-3)
(m-1-3) edge 			(m-1-4)
(m-1-4) edge			(m-1-5)
	;
\end{tikzpicture}
\]
This follows from Theorem 2.2 of \cite{Matsumoto}, and we will provide a new proof of this in Section \ref{IndependentKernels}.
We need the fact that $\widehat{\pi}$ injects into $\checkGamma(\dottedS)$---in other words, that the closure $\calB^\star$ of $\calB$ in $\checkGamma(\dottedS)$ is isomorphic to $\widehat{\calB}$. 
For this, one constructs a natural epimorphism $\checkGamma(\dottedS) \to \widetilde \Pure(\dottedS)$.

\begin{theorem}[Matsumoto]\label{birmanseparable} The closure $\calB^\star$ of $\calB$ in $\checkGamma(\dottedS)$ is isomorphic to $\widehat{\calB}$. 
\end{theorem}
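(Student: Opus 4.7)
The plan is to exploit the natural epimorphism $\rho \co \checkGamma(\dottedS) \twoheadrightarrow \widetilde{\Pure}(\dottedS)$ mentioned just above, together with the short exact sequence \eqref{TwiddleSequence}, to factor the comparison map between $\widehat{\calB}$ and $\calB^\star$.

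First, by the Birman exact sequence \eqref{BirmanSequence.Equation} the point--pushing subgroup $\calB$ is isomorphic to $\pi$ via $\tau$, and since closed surface groups are residually finite, $\widehat{\calB} \cong \widehat{\pi}$. The inclusion $\calB \hookrightarrow \calB^\star$ extends by the universal property of profinite completion to a continuous homomorphism $\widehat{\calB} \to \calB^\star$, whose image is dense and hence all of $\calB^\star$. The whole problem reduces to showing this map is injective.

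Next, I would identify the closure of $\calB$ inside $\widetilde{\Pure}(\dottedS) \subset \Aut(\widehat{\pi})$. Chasing the commuting diagram of Section~2, the subgroup $\calB$ sits inside $\Aut(\pi)$ as $\mathrm{Inn}(\pi)$, and hence inside $\Aut(\widehat{\pi})$ as the image of $\pi$ under $\widetilde{\tau} \co \widehat{\pi} \hookrightarrow \Aut(\widehat{\pi})$, which is an embedding because $\widehat{\pi}$ has trivial center (Anderson--Nakamura). Since $\pi$ is dense in $\widehat{\pi}$ and $\widetilde{\tau}$ is a closed embedding, the closure of $\calB$ in $\widetilde{\Pure}(\dottedS)$ is precisely the copy $\widetilde{\tau}(\widehat{\pi})$ appearing as the kernel in \eqref{TwiddleSequence}.

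Finally, since $\rho$ is continuous and restricts to the identity on $\Pure(\dottedS)$, it carries $\calB^\star$ onto this closure, namely $\widehat{\pi}$. The composition
\[
\widehat{\calB} \twoheadrightarrow \calB^\star \xrightarrow{\ \rho\ } \widehat{\pi}
\]
is a continuous homomorphism of profinite groups that agrees on the dense subgroup $\calB \cong \pi$ with the canonical map $\pi \hookrightarrow \widehat{\pi}$, so by uniqueness of continuous extensions it is the natural isomorphism $\widehat{\calB} \cong \widehat{\pi}$. In particular the first arrow is injective, completing the proof.

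The main obstacle is really the construction of the epimorphism $\rho$, which is taken as given in the excerpt (it follows from the fact that an element of $\checkGamma(\dottedS)$, being a limit of mapping classes, preserves the conjugacy class of the distinguished peripheral loop $\calN$ and therefore induces a well--defined element of $\Aut(\widehat{\pi})$ after a suitable basepoint--fixing normalization coming from point--pushing). Once $\rho$ is in hand, the remaining argument is essentially formal.
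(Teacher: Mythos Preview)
Your proposal is correct and takes exactly the same approach as the paper's proof (attributed there to Asada): factor $\widetilde{\tau}$ through $\check{\tau}$ via a continuous homomorphism $\xi \co \checkGamma(\dottedS) \to \widetilde{\Pure}(\dottedS)$ (your $\rho$), so that injectivity of $\widetilde{\tau}$ forces injectivity of $\check{\tau}$. The only organizational difference is that the paper makes the explicit construction of $\xi$ the heart of the proof---lift an element of $\Out(\widehat{\dottedpi})$ to an element of $\Aut(\widehat{\dottedpi})$ fixing the peripheral element $c$ of the new puncture (well-defined because the centralizer of $c$ in $\widehat{\dottedpi}$ is procyclic, by Nakamura), then project via the forgetful map $\widetilde{F}$ to $\Aut(\widehat{\pi})$---whereas you defer this construction to a sketch in your final paragraph and spell out the formal factorization in detail.
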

\begin{proof}[Proof (Asada)]
Let $\tau \co \pi \to \Pure(\dottedS)$ be as in the Birman exact sequence \eqref{BirmanSequence.Equation}, and let $\check{\tau} \co \widehat{\pi} \to \checkGamma(\dottedS)$ be its natural extension.
The image of $\check{\tau}$ is $\calB^\star$.
We claim that $\widetilde{\tau} \co \widehat{\pi} \to \widetilde{\Pure}(\dottedS)$ factors through $\check{\tau} \co \widehat{\pi} \to \checkGamma(\dottedS)$.
This will imply that $\check{\tau}$ is injective, and hence that $\calB^\star \cong \widehat{\pi}$.
So we find a commutative triangle
\[
\begin{tikzpicture}[>=to, line width = .075em, baseline=(current bounding box.center)]
\matrix (m) [matrix of math nodes, column sep=3em, row sep = 2em, text height=1.5ex, text depth=0.25ex]
{
& \checkGamma(\dottedS) \\
\widehat{\pi} & \widetilde{\Pure}(\dottedS) \\
};
\path[->,font=\scriptsize]
(m-1-2) edge 		node[auto] {$\xi$}	(m-2-2)
(m-2-1) edge 		node[below] {$\widetilde{\tau}$}	(m-2-2)
(m-2-1) edge		node[auto] {$\check{\tau}$}	(m-1-2)
	;
\end{tikzpicture}
\]
The argument here is borrowed from the proof of Theorem 1 of \cite{Asada.2001}.

Let $\widetilde{F} \co \widetilde{\Pure}(\doubledottedS) \to \widetilde{\Pure}(\dottedS)$ be the natural projection.
Given an element $\varphi$ of $\checkGamma(\dottedS) \subset \Out\big(\widehat{\dottedpi}\big)$, we may lift it to an element $\widetilde \varphi$ of $\widetilde{\Pure}(\doubledottedS) \subset \Aut\big(\widehat{\dottedpi}\big)$ that fixes the element $c$ corresponding to the new puncture of $\doubledottedS$, and then project to an element $\widetilde{F}(\widetilde{\varphi})$ of $\Aut(\widehat{\pi})$.
Any two lifts $\widetilde{\varphi}$ and $\widetilde{\varphi}'$ of $\varphi$ differ by an inner automorphism centralizing $c$.
This centralizer is topologically generated by $c$, by \cite{HerfortRibes} (see also Lemma 2.1.2 of \cite{Nakamura}), and so we have a well--defined map $\xi \co \checkGamma(\dottedS) \to \widetilde{\Pure}(\dottedS)$ given by $\xi(\varphi) = \widetilde{F}(\widetilde{\varphi})$. 

The map $\xi$ is clearly a homomorphism. 
To see that it is continuous, take a sequence $\varphi_n$ in $\checkGamma(\dottedS)$ converging to some $\varphi$. 
Pick lifts $\widetilde{\varphi}_n$ of the $\varphi_n$ to $\Aut(\widehat{\dottedpi})$ fixing $c$.
After passing to a subsequence, the $\widetilde{\varphi}_n$ converge to some $\widetilde{\varphi}_\smallinfinity$.  
Since the projection $\widetilde{\Pure}(\doubledottedS) \to \checkGamma(\dottedS)$ is continuous, the element $\widetilde{\varphi}_\smallinfinity$ is a lift of $\varphi$.
By continuity of multiplication, this $\widetilde{\varphi}_\smallinfinity$  fixes $c$, and since $\widetilde{F}$ is continuous, we conclude that $\xi$ is continuous.\footnote{Alternatively, one may invoke the deep theorem of Nikolov and Segal \cite{Nikolovsegal} that homomorphisms from topologically finitely generated profinite groups to arbitrary profinite groups are always continuous.}

Now, every inner automorphism of $\pi$ is realized by a point--pushing homeomorphism of $S$ which is the identity in a neighborhood of the basepoint.
Deleting the fixed points of such homeomorphisms and passing to isotopy classes produces the subgroup $\tau(\pi) = \calB  \subset \Pure(\dottedS)$, and so 
\[
\xi \circ \check{\tau}\big|_\pi = \widetilde{\tau}\big|_\pi.
\]
By the universal property of profinite completions, we have $\xi \circ \check{\tau}= \widetilde{\tau}$.

Since $\xi$ is continuous, the closure $\calB^\star$ of $\calB = \tau(\pi)$ in $\checkGamma(\dottedS)$ surjects the closed subgroup $\widetilde{\tau}(\widehat{\pi})$ of $\Aut(\widehat{\pi})$, as $\widetilde{\tau}(\pi)$ is dense in the latter.
As $\calB^\star = \check{\tau}(\widehat{\pi})$ and $\widetilde{\tau}$ is injective, we have $\calB^\star \cong \widehat{\pi}$.
\end{proof}

See \cite{McReynoldsThurston} for a different proof.

The continuous epimorphism $\checkGamma(\dottedS) \to \widetilde \Pure(\dottedS)$, the short exact sequences 
\[
\begin{tikzpicture}[>=to, line width = .075em, baseline=(current bounding box.center)]
\matrix (m) [matrix of math nodes, column sep=1.5em, row sep = 1em, text height=1.5ex, text depth=0.25ex]
{
1 & \widehat{\pi} & \widehat{\Pure}(\dottedS) & \widehat{\Pure}(S) & 1,  \\
};
\path[->,font=\scriptsize]
(m-1-1) edge 			(m-1-2)
(m-1-2) edge 			(m-1-3)
(m-1-3) edge 			(m-1-4)
(m-1-4) edge			(m-1-5)
	;
\end{tikzpicture}
\]
and \eqref{TwiddleSequence} are all that is needed to establish the congruence subgroup property for mapping class groups of punctured spheres.
See the proof of Theorem 1 in \cite{Asada.2001}, Section 4 of \cite{Boggi.2006}, or Section 6 of \cite{Lochak2011}.

\section{Profinite spaces}\label{ProfiniteStuff}

We quickly review some of the notions from \cite{Quick}. 
See also \cite{Boggi.2006,Lochak2011,QuickRemarks}.

A \textit{profinite set} is an inverse limit (in the topological category) of discrete finite sets.
Profinite sets form a category $\widehat{\calE}$ with continuous maps as morphisms.
A \textit{profinite space} is a simplicial object in this category---a contravariant functor from the simplex category $\Delta$ to $\widehat{\calE}$.
Profinite spaces form a category $\widehat{\calS}$ whose morphisms are the natural transformations.
We let $\calS$ denote the category of simplicial sets.
There is a forgetful functor $| \, \cdot \, | \co \widehat{\calS} \to \calS$ that sends a profinite space to its underlying simplicial set.

A profinite space $\calX$ may be considered a sequence $\calX_\bullet$ of profinite sets $\{\calX_n\}_{n=0}^\scriptinfinity$, called the \textit{skeleta}, together with all compositions of face $d_i \co \calX_n \to \calX_{n-1}$ and degeneracy maps $s_i \co \calX_n \to \calX_{n+1}$.
A group $G$ \textit{acts} on $\calX$ if it acts on the $\calX_n$ equivariantly respecting  face and degeneracy maps.
If $G$ is a topological group, we say that a $G$--action is \textit{continuous} if $G$ acts continuously on the $\calX_n$.
Since singletons are closed in profinite sets, stabilizers $G_x$ are closed for continuous actions.

The action of $\Pure(S)$ on the simplicial set $\curv(S)$ extends naturally to continuous actions of $\widehat{\Pure}(S)$ on $\widehat{\curv}(S)$ and $\checkGamma(S)$ on $\checkcurv(S)$.
The forgetful functor $| \, \cdot \, |$ has a left adjoint $(\, \cdot \, )^\circ \co \calS \to \widehat{\calS}$ called \textit{profinite completion}.\footnote{Quick \cite{Quick} uses the notation $\widehat{(\, \cdot \, )}$ for this functor, which conflicts with our use of that notation.}

We warn the reader that the profinite completion $\curv(S)^\circ$ of the simplicial set $\curv(S)$ is not the same as Boggi's profinite curve complex $\widehat{\curv}(S)$ of \cite{Boggi.2006}.
On the other hand, there is a variant of $(\, \cdot \, )^\circ$ adapted to the equivariant setting of discrete groups acting on simplicial sets that does produce $\widehat{\curv}(S)$: see section 4 of \cite{QuickRemarks}, particularly remark 4.5 there.
As we will not need to deal with the subtleties relating the two definitions, we refer the reader to \cite{Quick} and \cite{QuickRemarks} for details.

In \cite{Quick}, the homotopy theory of profinite spaces is developed by giving $\widehat{\calS}$ the structure of a model category, which allows us to discuss the homotopy type of the profinite spaces $\widehat{\curv}(S)$ and $\checkcurv(S)$. 
Quick defines profinite homotopy groups $\propi_n(\, \cdot \, )$ on the category $\widehat{\calS}$, and we say that a profinite space $\calX$ is \textit{simply--connected} if $\propi_0(\calX)$ and $\propi_1(\calX)$ vanish.

If $\calX$ is a simplicial finite set, then $\Pi_1(\calX)$ is the profinite completion $\widehat{\pi}_1(\calX)$ of the usual fundamental group $\pi_1(\calX)$ of $\calX$, by Proposition 2.1 of \cite{Quick}.
Furthermore, it follows from the proof of Theorem 3.11 of \cite{QuickRemarks} that $\Pi_1(\, \cdot \, )$ commutes with cofiltered limits of simplicial finite sets, see Proposition 7.1 of \cite{Boggi1111.2372}.
This means that if 
\[
\{\calX^\mu \to \calX^\lambda \ | \ \mu, \lambda \in \Lambda\}
\]
is a cofiltered system of simplicial finite sets, then
\[
\Pi_1\Big(\, \lim_{\lambda \in \Lambda} \calX^\lambda\Big) = \lim_{\lambda \in \Lambda} \Pi_1\big(\calX^\lambda\big).
\]

\section{Stabilizers}

Any vertex $\sigma$ of $\curv(S)$ may be viewed as a vertex of $\widehat{\curv}(S)$ or of $\checkcurv(S)$, and we let context determine which is meant.

There is ambiguity in the notation $\widehat{\Pure}(\dottedS)_\sigma$, as it could denote the stabilizer of $\sigma$ in the group $\widehat{\Pure}(\dottedS)$, or the profinite completion of $\Pure(\dottedS)_\sigma$.
For arbitrary $\sigma$, it remains unknown if these two groups coincide (but this would follow from the congruence subgroup property).
In the cases we consider here, there is no ambiguity, and we record the following lemma to alleviate any uneasiness.  
See Proposition 6.5 of \cite{Boggi.2006} for a more general statement.
\begin{proposition}\label{ProfiniteStabilizersForPants} Let $\gamma$ be a nonperipheral simple loop in $\dottedS$ that is peripheral in $S$. 
Then 
\[
\left(\widehat{\Pure}(\dottedS)\right)_{\! \gamma}
= \overline{(\Pure(\dottedS)_\gamma)} 
\cong \widehat{\Pure(\dottedS)_\gamma },
\]
and there is a short exact sequence
\[
\begin{tikzpicture}[>=to, line width = .075em, baseline=(current bounding box.center)]
\matrix (m) [matrix of math nodes, column sep=1.5em, row sep = 1em, text height=1.5ex, text depth=0.25ex]
{
1 & \widehat{\ZZ} & \widehat{\Pure}(\dottedS)_\gamma & \widehat{\Pure}(S) & 1. \\
};
\path[->,font=\scriptsize]
(m-1-1) edge 						(m-1-2)
(m-1-2) edge 						(m-1-3)
(m-1-3) edge 						(m-1-4)
(m-1-4) edge						(m-1-5)
	;
\end{tikzpicture}
\]

\end{proposition}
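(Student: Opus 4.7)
The plan is to establish everything at the discrete level via surface topology, then profinitely complete, with the technical heart being the comparison between the closure of the discrete stabilizer in $\widehat{\Pure}(\dottedS)$ and the profinite completion of the stabilizer itself.

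First, I would extract the discrete analogue. Since $\gamma$ is peripheral in $S$ around some puncture $p$ but nonperipheral in $\dottedS$, the puncture of $\dottedS$ and $p$ must lie on the same side of $\gamma$, so $\gamma$ bounds in $\dottedS$ a pair of pants $P$ containing both punctures and a complementary subsurface $\Sigma$ with single boundary $\gamma$. Cutting along $\gamma$, using that the pure mapping class group of $P$ rel $\partial P$ is $\langle T_\gamma\rangle$, and that capping $\partial\Sigma$ with a once--punctured disk recovers $S$, yields the discrete short exact sequence
\[
1 \to \langle T_\gamma\rangle \to \Pure(\dottedS)_\gamma \xrightarrow{F} \Pure(S) \to 1,
\]
with $\langle T_\gamma\rangle$ central. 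Surjectivity of $F|_{\Pure(\dottedS)_\gamma}$ follows by correcting an arbitrary lift of $\varphi\in\Pure(S)$ by a point--push to fix the isotopy class of $\gamma$ in $\dottedS$; the kernel $\calB\cap\Pure(\dottedS)_\gamma$ is identified as $\tau(\langle\alpha\rangle)$, where $\alpha\in\pi$ is the peripheral loop around $p$, via the standard point--push formula $\tau(\alpha)=T_\gamma$.

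Next, because $\Pure(\dottedS)$ is pure, each quotient $\Pure(\dottedS)/\mathfrak{a}$ acts on the finite set $\curv^\mathfrak{a}$ with stabilizer of the image of $\gamma$ equal to $\Pure(\dottedS)_\gamma\cdot\mathfrak{a}/\mathfrak{a}$; passing to the inverse limit over $\mathfrak{a}\in\calA$ identifies $\widehat{\Pure}(\dottedS)_\gamma$ with the closure $\overline{\Pure(\dottedS)_\gamma}$. The main technical step is then to show this closure is isomorphic to $\widehat{\Pure(\dottedS)_\gamma}$, i.e., that every finite--index $M\leq\Pure(\dottedS)_\gamma$ contains $\mathfrak{a}\cap\Pure(\dottedS)_\gamma$ for some finite--index $\mathfrak{a}\leq\Pure(\dottedS)$. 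The strategy is to leverage Grossman's injection $\Pure(\dottedS)\hookrightarrow\Out(\widehat{\dottedpi})=\varprojlim_C\Out(\dottedpi/C)$: given $M$ (WLOG normal), find a characteristic finite--index $C\lhd\dottedpi$ such that the congruence subgroup $\mathfrak{a}=\ker\bigl(\Pure(\dottedS)\to\Out(\dottedpi/C)\bigr)$ satisfies $\mathfrak{a}\cap\Pure(\dottedS)_\gamma\subseteq M$; this uses that $\Pure(\dottedS)_\gamma/M$ is finite together with the fact that each $g\in\Pure(\dottedS)_\gamma\setminus M$ becomes detectable in some $\Out(\dottedpi/C)$ by Grossman.

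Finally, profinitely completing the discrete sequence gives the desired SES. Surjectivity onto $\widehat{\Pure}(S)$ is by density; injectivity of $\widehat{\ZZ}\hookrightarrow\widehat{\Pure}(\dottedS)_\gamma$ follows from the same Grossman--based detection applied to $\langle T_\gamma\rangle$—the twist $T_\gamma^k$ acts on a curve $\beta$ crossing $\gamma$ by $\beta\mapsto\beta\gamma^k$, which is non--inner modulo $C$ whenever $\gamma^k\notin C$, and such $C$ exists for any $k$ by residual finiteness of $\dottedpi$. The main obstacle is the uniform cofinality step: producing a single congruence subgroup that separates all cosets of $M$ at once, which requires combining Grossman's injection with the finiteness of $\Pure(\dottedS)_\gamma/M$ in a careful compactness argument inside the profinite group $\Out(\widehat{\dottedpi})$.
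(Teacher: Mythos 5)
Your discrete short exact sequence, the identification of the kernel with $\tau(\langle\alpha\rangle)=\langle T_\gamma\rangle$, and the density argument identifying $(\widehat{\Pure}(\dottedS))_\gamma$ with the closure $\overline{\Pure(\dottedS)_\gamma}$ all match the paper. But the step you correctly isolate as the technical heart --- that every finite--index $M\leq\Pure(\dottedS)_\gamma$ contains $\mathfrak{a}\cap\Pure(\dottedS)_\gamma$ for some finite--index $\mathfrak{a}\leq\Pure(\dottedS)$ --- is not established by your argument, and "Grossman detection plus compactness" is a restatement of the conclusion rather than a proof. Grossman's theorem separates a nontrivial mapping class from the \emph{identity} in some $\Out(\dottedpi/C)$; it gives no way to separate $g\in\Pure(\dottedS)_\gamma$ from an arbitrary finite--index subgroup $M$ of the stabilizer, since there is no reason $M$ should be closed in the topology that $\Pure(\dottedS)$ (let alone its congruence quotients) induces on $\Pure(\dottedS)_\gamma$ --- that is exactly the statement to be proved, and the paper notes that for a general curve $\sigma$ the equality $(\widehat{\Pure}(\dottedS))_\sigma=\widehat{\Pure(\dottedS)_\sigma}$ is unknown. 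What makes the case at hand tractable is the special structure you already exhibited: $\Pure(\dottedS)_\gamma$ is a (central) extension of \emph{all} of $\Pure(S)$ by $\ZZ=\langle T_\gamma\rangle$. The paper exploits this by applying right--exactness of profinite completion to the sequence, comparing with the induced sequence for the closure $\overline{\Pure(\dottedS)_\gamma}$, and invoking the five lemma; everything then reduces to injectivity of $\widehat{\ZZ}\to\widehat{\Pure(\dottedS)_\gamma}$ composed with the projection to the closure.

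Your argument for that last injectivity is also insufficient. Producing, for each $k$, a characteristic $C$ with $\gamma^k\notin C$ (residual finiteness) only shows that $T_\gamma^k\neq 1$ in the completion for every $k$, i.e.\ that $\overline{\langle T_\gamma\rangle}$ is infinite; it does not show $\overline{\langle T_\gamma\rangle}\cong\widehat{\ZZ}$ or that $\widehat{\ZZ}\to\widehat{\Pure(\dottedS)_\gamma}$ is injective. (The closure of an infinite cyclic subgroup can be $\ZZ_p$, onto which $\widehat{\ZZ}$ surjects non--injectively.) One needs, for every $n$, a finite--index subgroup of $\Pure(\dottedS)$ meeting $\langle T_\gamma\rangle$ inside $\langle T_\gamma^n\rangle$, and since $\gamma$ is separating --- hence nullhomologous --- this cannot be seen in homology. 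The paper gets it from Scott's subgroup separability theorem for surface groups applied to the cyclic subgroup $\langle\alpha\rangle\subset\pi$, transported into $\Pure(\dottedS)$ via $\iota(\ZZ)\subset\calB$ and the isomorphism $\overline{\calB}\cong\widehat{\pi}$ (Theorem \ref{birmanseparable}). You should replace both the "detection" step and the residual--finiteness step with these inputs.
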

\begin{proof}
We first establish the isomorphism $\smash{\overline{(\Pure(\dottedS)_\gamma)} 
\cong \widehat{\Pure(\dottedS)_\gamma}}$.

Consider the short exact sequence
\begin{equation}\label{StabilizerSequence}
\begin{tikzpicture}[>=to, line width = .075em, baseline=(current bounding box.center)]
\matrix (m) [matrix of math nodes, column sep=1.5em, row sep = 1em, text height=1.5ex, text depth=0.25ex]
{
1  & \ZZ & \Pure(\dottedS)_\gamma  &  \Pure(S)  & 1 \\
};
\path[->,font=\scriptsize]
(m-1-1) edge 						(m-1-2)
(m-1-2) edge node[auto] {$\iota$}		(m-1-3)
(m-1-3) edge node[auto] {$\Phi$}		(m-1-4)
(m-1-4) edge						(m-1-5)
	;
\end{tikzpicture}
\end{equation}
where $\Phi$ is the restriction of the forgetful map $F$ to the stabilizer $\Pure(\dottedS)_\gamma$
and the subgroup $\iota(\ZZ)$ is generated by a Dehn twist in $\gamma$, see \cite{IvanovStability}.
Taking profinite completions is right exact (Proposition 3.2.5 of \cite{RibesZalesskii}), and so we have an exact sequence of profinite completions
\[
\begin{tikzpicture}[>=to, line width = .075em, baseline=(current bounding box.center)]
\matrix (m) [matrix of math nodes, column sep=2em, row sep = 1em, text height=1.5ex, text depth=0.25ex]
{
\widehat{\ZZ} & \widehat{\Pure(\dottedS)_\gamma} & \widehat{\Pure}(S) & 1. \\
};
\path[->,font=\scriptsize]
(m-1-1) edge node[auto] {$\widehat{\iota}$} 	(m-1-2)
(m-1-2) edge node[auto] {$\widehat{\Phi}$}		(m-1-3)
(m-1-3) edge							(m-1-4)
	;
\end{tikzpicture}
\]

Consider the natural surjection 
\[
q \co \widehat{\Pure(\dottedS)_\gamma} \longrightarrow \overline{(\Pure(\dottedS)_\gamma)}. 
\]
By a theorem of Scott, Theorem 3.3 of \cite{Scott,Scott.Correction}, if $G$ is a finitely generated subgroup of $\pi$, then the natural map $\widehat{G} \to \widehat{\pi}$ is injective.
Since $\iota(\ZZ)$ lies in $\calB$, and $\overline{\calB} \cong \widehat{\pi}$ by Theorem \ref{birmanseparable}, we have  a natural isomorphism between $\overline{\iota(\ZZ)} \subset \overline{\calB} \cap \overline{(\Pure(\dottedS)_\gamma)}$ and $\widehat{\ZZ}$.
So $q \circ \widehat{\iota}$, and hence $\widehat{\iota}$, is injective, and we have a short exact sequence
\[
\begin{tikzpicture}[>=to, line width = .075em, baseline=(current bounding box.center)]
\matrix (m) [matrix of math nodes, column sep=2em, row sep = 1em, text height=1.5ex, text depth=0.25ex]
{
1 & \widehat{\ZZ} & \widehat{\Pure(\dottedS)_\gamma} & \widehat{\Pure}(S) & 1. \\
};
\path[->,font=\scriptsize]
(m-1-1) edge							(m-1-2)
(m-1-2) edge node[auto] {$\widehat{\iota}$} 	(m-1-3)
(m-1-3) edge node[auto] {$\widehat{\Phi}$}		(m-1-4)
(m-1-4) edge							(m-1-5)
	;
\end{tikzpicture}
\]

The restriction of $\widehat{\Pure}(\dottedS) \to \widehat{\Pure}(S)$ to $\overline{(\Pure(\dottedS)_\gamma)}$ is a surjection through which the map $\widehat{\Pure(\dottedS)_\gamma} \to \widehat{\Pure(S)}$ factors.
It follows that the kernel of the map $\overline{(\Pure(\dottedS)_\gamma)} \to \widehat{\Pure}(S)$ is precisely $\overline{\iota(\ZZ)} \cong \widehat{\ZZ}$, and the Five Lemma gives us $\overline{(\Pure(\dottedS)_\gamma)} 
\cong \widehat{\Pure(\dottedS)_\gamma}$.

We now turn to the equality 
\[
\left(\widehat{\Pure}(\dottedS)\right)_{\! \gamma}
= \overline{(\Pure(\dottedS)_\gamma)}.
\]
The stabilizer of $\gamma$ in $\widehat{\Pure}(\dottedS)$ is closed.
So it suffices to show that $\Pure(\dottedS)_\gamma$ is dense therein.
Let $\mathfrak{a}$ be an element of $\calA(\dottedS)$ that is normal in $\Pure(\dottedS)$, and consider the finite simplicial set $\curv^\mathfrak{a}(\dottedS) = \curv(\dottedS) / \mathfrak{a}$, on which $\Pure(\dottedS) / \mathfrak{a}$ acts.
The vertices of $\curv^\mathfrak{a}(\dottedS)$ are simply $\mathfrak{a}$--orbits of vertices of $\curv(\dottedS)$.
Let $\varphi\mathfrak{a}$ be an element of $\big( \Pure(\dottedS)/\mathfrak{a} \big)_{\! \mathfrak{a} \gamma}$.
So $\varphi\mathfrak{a} \cdot \mathfrak{a} \gamma = \varphi \mathfrak{a} \gamma = \mathfrak{a} \gamma$.
Now, since $\mathfrak{a}$ is normal, we have $\varphi \mathfrak{a} = \mathfrak{a} \varphi$, and so $\mathfrak{a} \varphi \gamma = \mathfrak{a} \gamma$.
So there is a $\psi$ in $\mathfrak{a}$ such that $\psi \varphi \gamma = \gamma$.
But then $\psi\varphi$ is an element of $\Pure(\dottedS)_\gamma$ that maps to $\varphi \mathfrak{a}$, and 
the map $\Pure(\dottedS)_\gamma \to \big( \Pure(\dottedS)/\mathfrak{a} \big)_{\! \mathfrak{a} \gamma}$ is surjective.
Since $\mathfrak{a}$ was an arbitrary normal element of $\calA$, and the normal subgroups are cofinal in $\calA$, we obtain density of $\Pure(\dottedS)_\gamma$ in $\smash{(\widehat{\Pure}(\dottedS))_\gamma}$.
\end{proof}

The last paragraph of the proof in fact establishes the following general lemma, which we record for posterity.

\begin{lemma}
Let $G$ be a discrete group, $\widetilde G$ a profinite completion of $G$, and let $\mathcal{U}$ be the set of subgroups of $G$ corresponding to the open subgroups of $\widetilde G$.
Suppose that $G$ acts simplicially with finite orbits on a simplicial set $\mathcal{X}_\bullet$ and define 
\[
\widetilde{\mathcal{X}}_\bullet = \varprojlim_{U \in \mathcal{U}} \mathcal{X}_\bullet/U.
\]
(This is a profinite space equipped with a natural $G$--equivariant map $\iota \co \mathcal{X}_\bullet \to \widetilde{\mathcal{X}}_\bullet$ on which $\widetilde G$ acts simplicially and continuously.)
Then, given a simplex $\sigma$ of $\mathcal{X}_\bullet$, the stabilizer $\widetilde{G}_{\iota(\sigma)}$ of $\iota(\sigma)$ is the closure in $\widetilde G$ of the stabilizer $G_\sigma$ of $\sigma$.
\qed
\end{lemma}

\section{Fundamental groups of quotients}
We record here some results that we need in the proof of our main theorem (Theorem \ref{sequester}) in the next section.

Let $S = S_{g,n}$ be a Riemann surface of finite type of genus $g$ with $n$ punctures.  If $g =0$, let $h = n-4$.  If $n=0$, let $h = 2g-2$.  
If $g \geq 1$ and $n \neq 0$,  let $h = 2g-3+n$.  

\begin{theorem}[Harer, Theorem 3.5 of \cite{Harer}]\label{wedgeofspheres} The space $\curv(S)$ is homotopy equivalent to a wedge of spheres of dimension $h$.
In particular, if $\dim \curv(S) = 3g - 3 + n \geq 2$, then $\curv(S)$ is simply--connected.
\qed
\end{theorem}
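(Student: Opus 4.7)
The plan is to follow Harer's original argument via the arc complex, inducting on the complexity $\xi(S) = 3g-3+n$. The key auxiliary object is the arc complex $\mathrm{Arc}(S)$ whose $k$-simplices are isotopy classes of $(k+1)$-tuples of disjoint, pairwise non-isotopic essential arcs of $S$ with endpoints at the punctures. The first input is that $\mathrm{Arc}(S)$ is contractible; a clean proof is Hatcher's surgery-flow argument, which fixes a vertex $\alpha_0$ and canonically simplifies any arc system by cut-and-paste operations along intersections with $\alpha_0$, yielding a deformation retraction of $\mathrm{Arc}(S)$ onto the star of $\alpha_0$.

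Next I would exploit Penner's ideal cell decomposition of decorated Teichm\"uller space: the locus of filling arc systems in $\mathrm{Arc}(S)$ is homeomorphic to a product of decorated Teichm\"uller space with an open simplex, hence contractible. Let $\mathrm{Arc}_\infty(S)$ denote the complementary subcomplex of non-filling arc systems; the long exact sequence of the pair, together with the contractibility of both $\mathrm{Arc}(S)$ and the filling locus, identifies the reduced homology of $\mathrm{Arc}_\infty(S)$ with the relative homology, which is concentrated in a single degree. To compare $\mathrm{Arc}_\infty(S)$ with $\curv(S)$, I would define a simplicial map $\Phi\colon \mathrm{Arc}_\infty(S)' \to \curv(S)$ on an appropriate subdivision by sending a non-filling arc system $\alpha$ to the essential non-peripheral boundary components of a regular neighborhood of $\alpha \cup \partial S$. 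The preimage under $\Phi$ of the closed star of a simplex $\sigma \subset \curv(S)$ should deformation retract onto the join of the arc complexes of the components obtained by cutting $S$ along $\sigma$; by the inductive hypothesis these are wedges of spheres, and the standard formula for the join of such wedges (dimension $\sum_i (d_i + 1) - 1$) produces spheres of the correct dimension $h$ in each of the three regimes.

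The main obstacle will be the dimension bookkeeping across the induction and the careful handling of low-complexity base cases (four-punctured sphere, once-punctured torus, closed genus-two surface) where $\curv(S)$ is empty, discrete, or requires the standard ad hoc refinement; Harer's original Morse-theoretic argument on Teichm\"uller space handles these uniformly but at considerable technical cost, and a modern proof must still verify that no spurious homotopy appears at any stage of the join decomposition. For the ``in particular'' claim, once the wedge-of-spheres statement is established one checks directly from the three formulas for $h$ that the dimensional hypothesis forces $h \geq 2$ in every regime (the low-$h$ exceptional surfaces being excluded), so that $\curv(S)$ has the homotopy type of a wedge of spheres of dimension at least $2$ and is therefore simply connected.
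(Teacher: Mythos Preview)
The paper does not prove this theorem: it is stated with attribution to Harer and the citation \cite{Harer}, and is used as a black box (only the simple--connectivity consequence is invoked, as input to Armstrong's theorem). So there is no ``paper's own proof'' to compare against.

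Your outline is a reasonable sketch of the standard modern route to Harer's result---Hatcher's flow for contractibility of $\mathrm{Arc}(S)$, identification of the filling locus with decorated Teichm\"uller space, and a Quillen--type fiber analysis of the map from the non--filling subcomplex to $\curv(S)$, with induction on complexity. Two cautions if you intend to flesh this out. First, the arc--complex machinery only applies when $n \geq 1$; the closed case (your $n=0$ regime with $h = 2g-2$) is not a ``base case'' but genuinely requires a separate argument, either Harer's original Morse theory on Teichm\"uller space or a reduction to the punctured case via the Birman fibration. Second, the homotopy equivalence between $\mathrm{Arc}_\infty(S)$ and $\curv(S)$ is not quite the map $\Phi$ you describe on the nose---one has to pass through an intermediate poset and apply Quillen's Theorem~A (or an analogous nerve argument) rather than simply checking contractibility of preimages of closed stars; the literature (Harer, Ivanov's survey, Hatcher's notes) contains several slightly different ways to organize this, and the bookkeeping you flag as the ``main obstacle'' is indeed where the work lies.
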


\begin{theorem}[M. A. Armstrong, Theorem 3 of \cite{Armstrong.1965}]\label{armstrongtheorem} Let $\calX$ be a simply connected simplicial complex.  
Let $G$ be a group of simplicial homeomorphisms of $\calX$ acting without inversions and let $G_*$ be the normal subgroup of $G$ generated by elements with nonempty fixed--point set.  Then $\pi_1(\calX/G) \cong G/G_*$.
\qed
\end{theorem}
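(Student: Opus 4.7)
The plan is to construct a homomorphism $\Phi\co G \to \pi_1(\calX/G,\bar x_0)$ whose kernel is exactly $G_*$ and then verify surjectivity. Fix a basepoint $x_0\in \calX$, let $q\co \calX \to \calX/G$ be the quotient and set $\bar x_0 = q(x_0)$. For each $g \in G$, choose a path $\alpha_g$ in $\calX$ from $x_0$ to $g\cdot x_0$; such a path exists because $\calX$ is path connected, and its rel-endpoints homotopy class is unique because $\calX$ is simply connected. Define
\[
\Phi(g) = [q\circ \alpha_g] \in \pi_1(\calX/G,\bar x_0).
\]
The concatenation $\alpha_g \cdot (g\cdot \alpha_h)$ is a path from $x_0$ to $gh\cdot x_0$ whose $q$-image agrees with $(q\circ \alpha_g)\cdot(q\circ \alpha_h)$, so $\Phi$ is a homomorphism.

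The inclusion $G_* \subset \ker \Phi$ is immediate. If $g$ fixes a point $y$ and $\beta$ is a path from $x_0$ to $y$, then $\beta \cdot (g\cdot \beta)^{-1}$ runs from $x_0$ to $g\cdot x_0$ and projects to $q(\beta)\cdot q(\beta)^{-1}$, which is null-homotopic. Since $\ker \Phi$ is normal, it absorbs the normal closure $G_*$, and $\Phi$ descends to $\bar \Phi \co G/G_* \to \pi_1(\calX/G,\bar x_0)$. Surjectivity of $\bar \Phi$ follows from combinatorial path lifting: after suitable subdivision, any edge loop in $\calX/G$ lifts to an edge path in $\calX$ from $x_0$ to some $g\cdot x_0$, and that loop represents $\Phi(g)$.

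The substance of the theorem, and the main obstacle, is the reverse inclusion $\ker \Phi \subset G_*$. Assume $\Phi(g) = 1$, so $q\circ \alpha_g$ bounds a disk in $\calX/G$. The plan is to invoke simple connectedness of $\calX$ in its combinatorial form: after simplicial approximation, the null-homotopy of $q\circ \alpha_g$ can be decomposed into a finite sequence of elementary moves, each supported in the closed star of a single simplex of $\calX/G$. Each such elementary move lifts to $\calX$ because closed stars are contractible and locally the quotient $\calX \to \calX/G$ is modeled on the action of a simplicial stabilizer. A lifted move closes up to an honest homotopy in $\calX$ only after postcomposition by an element of $G$ that pointwise fixes a simplex; this correction element belongs to $G_*$ by definition. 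Accumulating the corrections as one traverses the decomposition of the null-homotopy writes $g$ as a product of such stabilizer elements, placing $g$ in $G_*$. The careful bookkeeping that every correction arising from an elementary move is a stabilizer of some simplex—rather than an arbitrary element of $G$—is the hard step, and it is exactly here that simple connectedness of $\calX$ is used in full strength.
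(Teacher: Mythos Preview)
The paper does not supply its own proof of this statement; it is quoted verbatim as a result of Armstrong with a citation to \cite{Armstrong1965}, and is then used as a black box. So there is no in-paper argument to compare your proposal against.

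On its own merits, your outline is the standard route to Armstrong's theorem: define $\Phi$ via projected paths, check it is a homomorphism using simple connectedness of $\calX$, observe $G_* \subset \ker\Phi$, get surjectivity from edge-path lifting, and then reduce $\ker\Phi \subset G_*$ to an analysis of a simplicial null-homotopy. Your first four steps are correct as written. In the last step you correctly identify the crux and the mechanism---decompose a combinatorial null-homotopy into elementary moves supported in stars, and show each discrepancy between a lifted move and its target is effected by an element fixing a simplex---but what you have written there is a plan rather than a proof. In particular, the assertion that ``locally the quotient $\calX \to \calX/G$ is modeled on the action of a simplicial stabilizer'' needs care: without a regularity hypothesis on the action (or a passage to barycentric subdivision), the quotient need not be simplicial and stars downstairs need not be images of stars upstairs, so the bookkeeping you allude to has real content. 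If you want a self-contained argument, Armstrong's original paper is the place to look; for the purposes of the present paper, the citation suffices.
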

\noindent This theorem allows us to view $\pi_1(\curv^\mathfrak{a})$ as a quotient of $\mathfrak{a}$ by the subgroup generated by its reducible elements.

\section{Cornering the congruence kernel}\label{cornering}

Boggi has observed the following theorem.
\begin{theorem}[Theorem 7.2 of \cite{Boggi1111.2372}]\label{BoggiFundamentalGroup} 
If $S = S_{g,n}$ with $\chi(S) < 0$ and $3g-3 + n \geq 2$, then
\[
\Pi_1(\widehat{\curv}(S)) =\lim_\calA \, \widehat{\pi}_1\big(\curv^\mathfrak{a}(S)\big)
= 1.
\]
\end{theorem}
\begin{proof}
By Theorem \ref{armstrongtheorem}, we have a surjection $\widehat{\mathfrak{a}} \twoheadlongrightarrow \widehat{\pi}_1\big(\curv^\mathfrak{a}\big)$. 
Since inverse limit functors are exact on profinite groups (see Proposition 2.2.4 of \cite{RibesZalesskii}), we have
\[
\lim_\calA \, \widehat{\mathfrak{a}}
\twoheadlongrightarrow \lim_\calA \widehat{\pi}_1\big(\curv^\mathfrak{a}\big) 
. 
\]
But $\lim_\calA \, \widehat{\mathfrak{a}} = 1$.
\end{proof}
\noindent

Given $\mathfrak{a}$ in $\calA$, we let $\mathfrak{a}_*$ denote the subgroup of $\mathfrak{a}$ generated by reducible elements.\footnote{An element is \textit{reducible} if it preserves an essential $1$--dimensional submanifold up to isotopy, or, equivalently, it fixes a simplex in the curve complex.}
The rest of this section is devoted to the proof of the following theorem.

\begin{theorem}\label{sequester} Let $S$ be a smooth affine algebraic curve.
Let $\calP$ be the set of nonperipheral simple loops on $\dottedS$ that are peripheral in $S$.
Then
\[
\mathbf{K}(\dottedS) 
\subset \bigcap_{\gamma \, \in \calP} \, \widehat{\Pure}(\dottedS)_\gamma 
\]
and 
$
\Pi_1(\checkcurv(\mathring{S})) 
= \smash{\lim_{\calK} \widehat{\pi}_1(\curv^\kappa(\dottedS)) 
= 1.}
$
\end{theorem}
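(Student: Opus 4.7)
The plan is to prove the two assertions in order: the first is the main step, and the second follows formally via Armstrong's theorem.

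For the stabilizer inclusion, fix $\gamma \in \calP$. Identify $\mathbf{K}(\dottedS)$ with the intersection $\bigcap_{\kappa \in \calK} \overline{\kappa}$ of closures taken in $\widehat{\Pure}(\dottedS)$. By Theorem \ref{product} each $\calB^\kappa \kappa_\gamma$ is itself congruence, and Corollary \ref{CofinalCorollary} then gives $\mathbf{K}(\dottedS) = \bigcap_\kappa \overline{\calB^\kappa \kappa_\gamma}$. In the compact group $\widehat{\Pure}(\dottedS)$ the product of two closed sets is closed, so $\overline{\calB^\kappa \kappa_\gamma} = \overline{\calB^\kappa} \cdot \overline{\kappa_\gamma}$; for each $x \in \mathbf{K}(\dottedS)$ and each $\kappa$ we therefore obtain a factorization $x = b_\kappa \cdot k_\kappa$ with $b_\kappa \in \overline{\calB^\kappa}$ and $k_\kappa \in \overline{\kappa_\gamma}$. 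A compactness argument extracts a convergent subnet $b_{\kappa(\alpha)} \to b$, and since $\overline{\kappa}$ is clopen in $\widehat{\Pure}(\dottedS)$ one has $\overline{\calB^\kappa} = \overline{\calB} \cap \overline{\kappa}$, so $b$ lies in $\bigcap_\kappa \overline{\calB^\kappa} = \overline{\calB} \cap \mathbf{K}(\dottedS)$. By Theorem \ref{birmanseparable}, the quotient map $\widehat{\Pure}(\dottedS) \twoheadrightarrow \checkGamma(\dottedS)$ restricts to an isomorphism $\overline{\calB} \cong \calB^\star \cong \widehat{\pi}$, so $\overline{\calB} \cap \mathbf{K}(\dottedS) = 1$ and hence $b = 1$. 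Then $x = \lim k_{\kappa(\alpha)}$ lies in the closed subgroup $\widehat{\Pure}(\dottedS)_\gamma$, giving the first inclusion.

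For the vanishing of $\lim_\calK \widehat{\pi}_1(\curv^\kappa(\dottedS))$, I combine Armstrong's theorem with the inclusion just proved. Theorem \ref{armstrongtheorem} applied to $\kappa$ acting on the simply-connected complex $\curv(\dottedS)$ (Theorem \ref{wedgeofspheres}) identifies $\pi_1(\curv^\kappa(\dottedS)) \cong \kappa/\kappa_*$, where $\kappa_*$ is normally generated by the reducible elements of $\kappa$. Profinite completion yields, for each $\kappa$, a short exact sequence
\[
1 \longrightarrow \overline{\kappa_*} \longrightarrow \overline{\kappa} \longrightarrow \widehat{\pi}_1(\curv^\kappa(\dottedS)) \longrightarrow 1,
\]
and these assemble into an inverse system over $\calK$. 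Exactness of inverse limits on profinite groups (Proposition 2.2.4 of \cite{RibesZalesskii}) produces a surjection $\mathbf{K}(\dottedS) = \lim_\kappa \overline{\kappa} \twoheadrightarrow \lim_\kappa \widehat{\pi}_1(\curv^\kappa(\dottedS))$, so it suffices to show $\mathbf{K}(\dottedS) \subset \overline{\kappa_*}$ for every $\kappa$. Fix any $\gamma \in \calP$; the same clopen argument as above, together with Proposition \ref{ProfiniteStabilizersForPants}, gives $\widehat{\Pure}(\dottedS)_\gamma \cap \overline{\kappa} = \overline{\kappa_\gamma}$, and every element of $\kappa_\gamma$ is reducible, so $\kappa_\gamma \subset \kappa_*$. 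The first part then forces $\mathbf{K}(\dottedS) \subset \widehat{\Pure}(\dottedS)_\gamma \cap \overline{\kappa} = \overline{\kappa_\gamma} \subset \overline{\kappa_*}$, as required.

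The chief obstacle is the factorization step in the first part. Decomposing an arbitrary $x \in \mathbf{K}(\dottedS)$ across all congruence levels at once, and then using compactness to isolate a single limiting ``$\calB$-part'' $b$, is precisely where both the affine hypothesis on $S$ (through Theorem \ref{product} and Corollary \ref{CofinalCorollary}) and Theorem \ref{birmanseparable} become indispensable: the former supplies the cofinal family of congruence subgroups of the form $\calB^\kappa \kappa_\gamma$, and the latter forces the resulting $b$ to be trivial.
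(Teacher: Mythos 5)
Your proof is correct, and its skeleton---Theorem \ref{product} and Corollary \ref{CofinalCorollary} to produce the cofinal family $\calB^\kappa\kappa_\gamma$, Theorem \ref{birmanseparable} to kill the $\calB$--part, then Armstrong's theorem, right--exactness of profinite completion, and exactness of inverse limits---is the paper's. The differences are local but genuine. Where you extract a convergent subnet from the factorizations $x = b_\kappa k_\kappa$ and use $\overline{\calB}\cap\mathbf{K}(\dottedS) = 1$ to force $b = 1$, the paper instead proves Lemma \ref{ClosureOfProductsIntersectTrivially}: for a fixed $\mathfrak{h}$ one has $\bigcap_\kappa \overline{\calB^\kappa}\,\overline{\mathfrak{h}_\gamma} = \overline{\mathfrak{h}_\gamma}$, because the closed subgroup $\overline{\mathfrak{h}_\gamma}$ is the intersection of the finite--index subgroups containing it and each such subgroup contains some $\overline{\calB^{\mathfrak{a}}}\,\overline{\mathfrak{h}_\gamma}$; this avoids any net extraction but invokes Theorem \ref{birmanseparable} differently (to see that the congruence topology on $\calB$ is the full profinite topology, so the index set $\calK$ may be traded for $\calA$). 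As a result the paper obtains the stronger containment $\mathbf{K}(\dottedS)\subset\overline{\mathfrak{h}_\gamma}$ for every $\mathfrak{h}\in\calK$ in one stroke and reads off both conclusions from it, whereas you prove the stabilizer inclusion first and then recover $\mathbf{K}(\dottedS)\subset\overline{\kappa_\gamma}\subset\overline{\kappa_*}$ from it via Proposition \ref{ProfiniteStabilizersForPants} and the clopen--ness of $\overline{\kappa}$---an extra, but correct, step the paper does not need. Your compactness extraction is essentially the same device the paper deploys later in the proof of Theorem \ref{KernelIndependence}, so nothing here is foreign to the paper's toolkit; the two arguments buy the same theorem at essentially the same price.
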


\begin{lemma}\label{CongruenceKernelAsIntersection} 
\[
\lim_{\calK} \widehat{\kappa} = \mathbf{K}.
\]
\end{lemma}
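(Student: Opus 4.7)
The plan is to realize $\mathbf{K}(\dottedS)$ as an intersection of closures of congruence subgroups inside $\widehat{\Pure}(\dottedS)$, and then recognize that intersection as the claimed inverse limit. Throughout I use the canonical identification
\[
\checkGamma(\dottedS) \;=\; \lim_{\calK}\, \Pure(\dottedS)/\kappa,
\]
which follows from the definition of $\checkGamma(\dottedS)$ as the closure of the image of $\Pure(\dottedS)$ in $\Out(\widehat{\dottedpi})$ together with the cofilteredness of $\calK$ provided by Lemma \ref{finiteintersections}. Since $\Pure(\dottedS)$ is itself a congruence subgroup, $\mathbf{K}(\dottedS)$ lies in $\widehat{\Pure}(\dottedS)$ and coincides with $\ker\!\left(\widehat{\Pure}(\dottedS) \to \checkGamma(\dottedS)\right)$.

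The kernel of a continuous homomorphism into an inverse limit is the intersection of the kernels of the coordinate projections, so
\[
\mathbf{K}(\dottedS) \;=\; \bigcap_{\kappa \in \calK}\, \ker\!\left(\widehat{\Pure}(\dottedS) \to \Pure(\dottedS)/\kappa\right) \;=\; \bigcap_{\kappa \in \calK}\, \overline{\kappa}.
\]
Next, for each $\kappa \in \calK$, since $\kappa$ has finite index in $\Pure(\dottedS)$, the inclusion $\kappa \hookrightarrow \Pure(\dottedS)$ induces an injection $\widehat{\kappa} \hookrightarrow \widehat{\Pure}(\dottedS)$ with image $\overline{\kappa}$, sitting in a short exact sequence
\[
1 \to \widehat{\kappa} \to \widehat{\Pure}(\dottedS) \to \Pure(\dottedS)/\kappa \to 1,
\]
by a standard application of the exactness result of \cite{RibesZalesskii} (in the spirit of Proposition 3.2.5, used elsewhere in this paper). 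The inclusions $\kappa' \subset \kappa$ in $\calK$ induce compatible closed inclusions $\widehat{\kappa'} \hookrightarrow \widehat{\kappa}$ as subgroups of $\widehat{\Pure}(\dottedS)$.

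Finally, a filtered inverse system of closed subgroups of a profinite group has inverse limit equal to its intersection, so the cofilteredness of $\calK$ yields
\[
\bigcap_{\kappa \in \calK}\, \overline{\kappa} \;=\; \lim_{\calK}\, \overline{\kappa} \;\cong\; \lim_{\calK}\, \widehat{\kappa},
\]
which combined with the first display gives $\mathbf{K}(\dottedS) = \lim_\calK \widehat{\kappa}$. I expect no genuine obstacle; the only step that is more than a formal manipulation is the identification $\overline{\kappa} \cong \widehat{\kappa}$, and this is immediate from the finite-index hypothesis.
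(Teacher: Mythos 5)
Your proposal is correct and follows essentially the same route as the paper: identify $\overline{\kappa}\cong\widehat{\kappa}$ via the finite-index hypothesis, recognize the cofiltered limit of closed subgroups as their intersection, and observe that this intersection is exactly the kernel of $\widehat{\Pure}(\dottedS)\to\checkGamma(\dottedS)$. The only cosmetic caveat is that for non-normal $\kappa$ the quotient $\Pure(\dottedS)/\kappa$ is only a coset space, so one should pass to the cofinal subsystem of normal congruence subgroups before speaking of kernels and short exact sequences; this does not affect the argument.
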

\begin{proof}
The closure $\overline{\kappa}$ of $\kappa$ in $\smash{\widehat{\Pure}(\dottedS)}$ is isomorphic to $\widehat{\kappa}$, and so $\lim_{\calK} \widehat{\kappa} = \lim_\calK \overline{\kappa}$.
This limit is the intersection of all of the finite index subgroups of $\widehat{\Pure}(\dottedS)$ obtained by pulling back finite index subgroups of $\checkGamma(\dottedS)$, which is precisely the congruence kernel $\mathbf{K}$.
\end{proof}

\begin{lemma}\label{ClosureOfProductsIntersectTrivially} 
For any $\mathfrak{h}$ in $\calK$, we have $\lim_{\kappa \in \calK} \overline{\calB^\kappa} \,  \overline{\mathfrak{h}_\gamma} = \overline{\mathfrak{h}_\gamma}$.
\end{lemma}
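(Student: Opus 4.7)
The plan is to reduce the lemma to the triviality of $\bigcap_{\kappa \in \calK} \overline{\calB^\kappa}$, and then to deduce the latter from Matsumoto's Theorem \ref{birmanseparable}. The containment $\overline{\mathfrak{h}_\gamma} \subset \bigcap_{\kappa \in \calK} \overline{\calB^\kappa}\cdot \overline{\mathfrak{h}_\gamma}$ is immediate, as each $\overline{\calB^\kappa}$ contains the identity of $\widehat{\Pure}(\dottedS)$.

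For the reverse containment, I would invoke the following standard compactness fact applied in the compact Hausdorff topological group $\widehat{\Pure}(\dottedS)$: if $\{X_\kappa\}$ is a downward--directed family of closed subsets and $Y$ is closed, then
\[
\bigcap_\kappa X_\kappa \cdot Y \ = \ \bigg(\bigcap_\kappa X_\kappa\bigg) \cdot Y.
\]
The containment $\supset$ is trivial, and for $\subset$, given $z \in \bigcap_\kappa X_\kappa Y$ one writes $z = x_\kappa y_\kappa$ with $x_\kappa \in X_\kappa$ and $y_\kappa \in Y$; compactness of $Y$ allows one to pass to a subnet with $y_\kappa \to y \in Y$, whence $x_\kappa = z y_\kappa^{-1} \to z y^{-1}$, and the decreasing hypothesis combined with closedness forces this limit into every $X_{\kappa_0}$. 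Taking $X_\kappa = \overline{\calB^\kappa}$ and $Y = \overline{\mathfrak{h}_\gamma}$ reduces the lemma to the identity $\bigcap_\kappa \overline{\calB^\kappa} = 1$.

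For this triviality, note that $\calB^\kappa \subset \calB \cap \kappa$ gives $\overline{\calB^\kappa} \subset \overline{\calB} \cap \overline{\kappa}$; intersecting over $\calK$ and applying Lemma \ref{CongruenceKernelAsIntersection} yields
\[
\bigcap_{\kappa \in \calK} \overline{\calB^\kappa} \ \subset \ \overline{\calB} \cap \bigcap_{\kappa \in \calK} \overline{\kappa} \ = \ \overline{\calB} \cap \mathbf{K}.
\]
By Theorem \ref{birmanseparable}, the composition $\widehat{\calB} = \widehat{\pi} \twoheadrightarrow \overline{\calB} \twoheadrightarrow \calB^\star$ is an isomorphism, so both of its factors are isomorphisms. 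In particular, the restriction to $\overline{\calB}$ of the epimorphism $\widehat{\Pure}(\dottedS) \twoheadrightarrow \checkGamma(\dottedS)$ is injective, and therefore its kernel $\overline{\calB} \cap \mathbf{K}$ vanishes. The principal obstacle in this argument is precisely this last step: it is Matsumoto's injectivity of the completed Birman map that forces $\overline{\calB} \cap \mathbf{K} = 1$, and it is here that the affine hypothesis on $S$ is used in an essential way.
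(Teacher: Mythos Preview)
Your proof is correct and takes a genuinely different route from the paper's, though both rest on Theorem \ref{birmanseparable}.

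The paper first invokes Theorem \ref{birmanseparable} to replace the index system $\calK$ by $\calA$ (since the congruence and full profinite topologies agree on $\calB$), and then argues directly that every open subgroup $\mathfrak{H}$ of $\widehat{\Pure}(\dottedS)$ containing $\overline{\mathfrak{h}_\gamma}$ also contains some $\overline{\calB^{\mathfrak{a}}}\,\overline{\mathfrak{h}_\gamma}$ (with $\mathfrak{a} = \mathfrak{H}\cap\Pure(\dottedS)$); since a closed subgroup is the intersection of the open subgroups containing it, the result follows. Your argument instead keeps the index set $\calK$ and uses a general compactness fact in compact Hausdorff groups to pull the intersection through the product, reducing to $\bigcap_{\kappa}\overline{\calB^\kappa}=1$, which you then extract from Theorem \ref{birmanseparable} via $\overline{\calB}\cap\mathbf{K}=1$. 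The paper's route is tailored to the profinite setting (open subgroups as a neighborhood base), while yours isolates a clean topological lemma that would transplant to other compact-group situations; both are short and the underlying content is the same.

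One small remark: your closing sentence locates the affine hypothesis at the invocation of Theorem \ref{birmanseparable}, but that theorem is stated and proved without any affineness assumption. The affine hypothesis on $S$ enters the ambient discussion only through the existence of the curve $\gamma$ (nonperipheral in $\dottedS$, peripheral in $S$), not through the Birman-kernel injectivity itself.
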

\begin{proof} 
By Theorem \ref{birmanseparable},
\[\lim_{\kappa \in \calK} \overline{\calB^\kappa} \,  \overline{\mathfrak{h}_\gamma}
= \lim_{\mathfrak{a} \in \calA} \overline{\calB^\mathfrak{a}} \,  \overline{\mathfrak{h}_\gamma}, 
\]
and clearly
\[
\lim_{\mathfrak{a} \in \calA} \overline{\calB^\mathfrak{a}} \,  \overline{\mathfrak{h}_\gamma}
= \bigcap_{\mathfrak{a} \in \calA} \overline{\calB^\mathfrak{a}} \,  \overline{\mathfrak{h}_\gamma} 
\supset \overline{\mathfrak{h}_\gamma}.
\]

On the other hand, if $\mathfrak{H}$ is a finite index subgroup of $\smash{\widehat\Pure(\dottedS)}$ that contains $\smash{\overline{\mathfrak{h}_\gamma}}$, then there is a finite index subgroup of $\mathfrak{H}$ of the form $\overline{\calB^\mathfrak{a}}  \, \overline{\mathfrak{h}_\gamma}$, obtained by letting $\mathfrak{a} = \mathfrak{H} \cap \Pure(\dottedS)$.
But since $\overline{\mathfrak{h}_\gamma}$ is closed, it is the intersection of all of the finite index subgroups containing it, and we conclude that 
\[
\overline{\mathfrak{h}_\gamma}= \bigcap_{\mathfrak{a} \in \calA} \overline{\calB^\mathfrak{a}} \,  \overline{\mathfrak{h}_\gamma}. \qedhere
\] 
\end{proof}

\begin{proof}[Proof of Theorem \ref{sequester}] Consider the short exact sequence 
\[
\begin{tikzpicture}[>=to, line width = .075em, baseline=(current bounding box.center)]
\matrix (m) [matrix of math nodes, column sep=1.5em, row sep = 1em, text height=1.5ex, text depth=0.25ex]
{
1 & \kappa_* & \kappa & \pi_1(\curv^\kappa) & 1 \\
};
\path[->,font=\scriptsize]
(m-1-1) edge 	(m-1-2)
(m-1-2) edge 	(m-1-3)
(m-1-3) edge	(m-1-4)
(m-1-4) edge	(m-1-5)
	;
\end{tikzpicture}
\]
given by Theorem \ref{armstrongtheorem}.
Passing to profinite completions is right exact (see Proposition 3.2.5 of \cite{RibesZalesskii}), and so we have an exact sequence
\[
\begin{tikzpicture}[>=to, line width = .075em, baseline=(current bounding box.center)]
\matrix (m) [matrix of math nodes, column sep=1.5em, row sep = 1em, text height=1.5ex, text depth=0.25ex]
{
\widehat{\kappa_*} & \widehat{\kappa} & \widehat{\pi}_1(\curv^\kappa) & 1. \\
};
\path[->,font=\scriptsize]
(m-1-1) edge 	(m-1-2)
(m-1-2) edge 	(m-1-3)
(m-1-3) edge	(m-1-4)
	;
\end{tikzpicture}
\]
Inverse limits are exact on profinite groups (Proposition 2.2.4 of \cite{RibesZalesskii}), and so, by Lemma \ref{CongruenceKernelAsIntersection}, we have an exact sequence
\begin{equation}\label{profiniteArmstrong}
\begin{tikzpicture}[>=to, line width = .075em, baseline=(current bounding box.center)]
\matrix (m) [matrix of math nodes, column sep=1.5em, row sep = 1em, text height=1.5ex, text depth=0.25ex]
{
\displaystyle{\lim_{\calK}} \, \widehat{\, \kappa_* \, } & \mathbf{K} & 
\displaystyle{\lim_{\calK} \, \widehat{\pi}_1(\curv^\kappa(\dottedS))}
& 1 . \\
};
\path[->,font=\scriptsize]
(m-1-1) edge 	(m-1-2)
(m-1-2) edge 	(m-1-3)
(m-1-3) edge	(m-1-4)
	;
\end{tikzpicture}
\end{equation}

We let $\mathfrak{R}_*$ and $\mathfrak{R}_\gamma \subset \mathfrak{R}_*$ be the images in $\mathbf{K}$ of  $\lim_{\calK} \widehat{\, \kappa_* \, }$ and $\lim_{\calK} \widehat{\kappa}_\gamma$, respectively.

By Corollary \ref{CofinalCorollary},  the system of subgroups 
\[
\{ \calB^\kappa \kappa_\gamma \ | \  \kappa \mbox{\ in } \calK \mbox{\ and } \calB^\kappa = \kappa \cap \calB \}
\]
is cofinal in $\calK$.
Now, $\overline{\calB^\kappa} \cong \widehat{\calB^\kappa}$, and making use of the forgetful map $\Pure(\dottedS) \to \Pure(S)$, it is easily seen that $\overline{\kappa_\gamma} \cong \widehat{\kappa_\gamma}$.
We also have $\widehat{\calB^\kappa \kappa_\gamma} \cong  \overline{ \calB^\kappa \kappa_\gamma} \subset \widehat{\Pure}$, and since $\overline{\calB^\kappa}\overline{\kappa_\gamma}$ is dense in $\overline{ \calB^\kappa \kappa_\gamma}$, the two are equal, as both are closed.
The congruence kernel $\mathbf{K}$ is then
\[
\mathbf{K} 
= \lim_{\kappa \in \calK} \widehat{\calB^\kappa \kappa_\gamma} 
= \lim_{\kappa \in \calK} \overline{\calB^\kappa} \overline{\kappa_\gamma}.
\]
Now, for a fixed $\mathfrak{h}$ in $\calK$, the subset $\{ \mathfrak{h} \cap \kappa \ | \ \kappa \in \calK \}$ is cofinal in $\calK$, by Lemma \ref{finiteintersections}.
It follows that, for any such $\mathfrak{h}$,
\begin{equation}\label{PartialLimit}
\lim_{\kappa \in \calK} \overline{\calB^\kappa} \overline{\kappa_\gamma}  
\subset \lim_{\kappa \in \calK} \overline{\calB^\kappa} \,  \overline{\mathfrak{h}_\gamma}
\end{equation}
and we have
\begin{align*}
\mathbf{K} &
=\lim_{\kappa \in \calK} \overline{\calB^\kappa} \overline{\kappa_\gamma}   \\
& \subset \lim_{\mathfrak{h} \in \calK} \, \lim_{\kappa \in \calK} \overline{\calB^\kappa} \,  \overline{\mathfrak{h}_\gamma}  &   \mbox{by \eqref{PartialLimit}} \\
& \subset \lim_{\mathfrak{h} \in \calK} \, \widehat{\mathfrak{h}}_\gamma & \mbox{by Lemma \ref{ClosureOfProductsIntersectTrivially}}\\
& \subset  \mathfrak{R}_\gamma  \\
& \subset  \mathfrak{R}_* 
\end{align*}

Together with \eqref{profiniteArmstrong}, we have the short exact sequence
\[
\begin{tikzpicture}[>=to, line width = .075em, baseline=(current bounding box.center)]
\matrix (m) [matrix of math nodes, column sep=1.5em, row sep = 1em, text height=1.5ex, text depth=0.25ex]
{
\mathbf{K} & \mathbf{K} & 
\displaystyle{\lim_{\calK} \, \widehat{\pi}_1(\curv^\kappa(\dottedS))} 
& 1 . \\
};
\path[->,font=\scriptsize]
(m-1-1) edge 	(m-1-2)
(m-1-2) edge 	(m-1-3)
(m-1-3) edge	(m-1-4)
	;
\end{tikzpicture}\qedhere
\]
\end{proof}

\begin{corollary}
If $S$ is affine, then, for any prime number $p$,
\[
\rmH_1(\checkcurv(\dottedS) ; \mathbb{F}_p) 
:= \lim_\calK \, \rmH_1 \big(\curv^\kappa(\dottedS) ; \mathbb{F}_p \big)
= 0.  
\]
\end{corollary}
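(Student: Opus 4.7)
The plan is to deduce the corollary directly from Theorem \ref{sequester} by relating mod--$p$ first homology to the profinite fundamental groups of the quotients $\curv^\kappa(\dottedS)$. First I would note that each $\curv^\kappa(\dottedS)$ is a finite simplicial complex, so $\pi_1(\curv^\kappa(\dottedS))$ is finitely generated and $\rmH_1(\curv^\kappa(\dottedS);\mathbb{F}_p)$ is a finite $\mathbb{F}_p$--vector space. By the Hurewicz theorem together with the universal coefficient theorem, $\rmH_1(\curv^\kappa(\dottedS);\mathbb{F}_p)$ is the largest $\mathbb{F}_p$--vector space quotient of $\pi_1(\curv^\kappa(\dottedS))$; as this is a finite quotient of a discrete group, the universal property of profinite completion produces a natural surjection
\[
\widehat{\pi}_1(\curv^\kappa(\dottedS)) \twoheadlongrightarrow \rmH_1(\curv^\kappa(\dottedS); \mathbb{F}_p) .
\]

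Next, since the Hurewicz map is natural, these surjections are compatible with the morphisms in the inverse system $\calK$ induced by the quotient maps $\curv^\kappa(\dottedS) \to \curv^{\kappa'}(\dottedS)$ for $\kappa \subset \kappa'$. Because inverse limits are exact on profinite groups (Proposition 2.2.4 of \cite{RibesZalesskii}), I can then pass the surjection to the limit, obtaining
\[
\lim_\calK \widehat{\pi}_1(\curv^\kappa(\dottedS)) \twoheadlongrightarrow \lim_\calK \rmH_1(\curv^\kappa(\dottedS);\mathbb{F}_p) = \rmH_1(\checkcurv(\dottedS);\mathbb{F}_p) .
\]

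Finally, I would invoke Theorem \ref{sequester}, which asserts that the left--hand side is trivial; hence so is the right--hand side. There is no serious obstacle in this argument beyond keeping track of naturality in $\kappa$, which is automatic from the naturality of the Hurewicz homomorphism; the entire substance is concentrated in Theorem \ref{sequester} itself.
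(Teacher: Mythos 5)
Your argument is correct and is exactly the proof the paper gives: one passes the natural surjections $\widehat{\pi}_1(\curv^\kappa(\dottedS)) \twoheadrightarrow \rmH_1(\curv^\kappa(\dottedS);\mathbb{F}_p)$ to the inverse limit using exactness of inverse limits on profinite groups, and then applies Theorem \ref{sequester}. You have merely spelled out the Hurewicz/universal-property justification for the surjection that the paper takes for granted.
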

\begin{proof}
The surjections $\smash{\widehat{\pi}_1(\curv^\kappa(\dottedS)) \twoheadlongrightarrow \rmH_1 \big(\curv^\kappa(\dottedS) ; \mathbb{F}_p \big)}$, exactness of inverse limits on profinite groups, and Theorem \ref{sequester} prove the corollary.
\end{proof}

Let $S$ be an affine curve and let $\Sigma$ be its projective completion.
The complement $\Sigma - S$ is a finite set of $n$ points, for some $n$.
There is a map $\Pure(S) \to \Pure(\Sigma)$ obtained by extending homeomorphisms from $S$ to $\Sigma$.
The kernel $\mathfrak{b}(S)$ of this map is the \textit{braid group of $\Sigma$ on $n$ strands}. 
The elements of $\mathfrak{b}(S)$ are called \textit{braids}.

If $\gamma$ is a nonperipheral simple closed loop in $\dottedS$ that is peripheral in $S$, then a Dehn twist $T_\gamma$ in $\gamma$ lies in $\mathfrak{b}(\dottedS)$.
In fact, the Dehn twists in such curves generate $\mathfrak{b}(\dottedS)$.
Since $\widehat{\Pure}(\dottedS)_\gamma$ lies in the centralizer of $T_\gamma$,
we have the following corollary of Theorem \ref{sequester}.
\begin{corollary}\label{Corollary:BraidCentralizer} When $S$ is affine, the congruence kernel $\mathbf{K}(\dottedS)$ centralizes $\mathfrak{b}(\dottedS)$. \qed
\end{corollary}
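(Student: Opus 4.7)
My plan is a direct assembly of three ingredients, the only substantial one being Theorem \ref{sequester}. First, I would apply that theorem: since $S$ is affine,
\[
\mathbf{K}(\dottedS) \subset \bigcap_{\gamma \, \in \calP} \widehat{\Pure}(\dottedS)_\gamma,
\]
so every element of the congruence kernel fixes the isotopy class of each $\gamma$ in $\calP$ inside the profinite curve complex.

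Second, I would verify that each stabilizer $\widehat{\Pure}(\dottedS)_\gamma$ in fact centralizes the Dehn twist $T_\gamma$. On $\Pure(\dottedS)_\gamma$ this is the standard change--of--coordinates identity $\varphi T_\gamma \varphi^{-1} = T_{\varphi(\gamma)} = T_\gamma$, so $\Pure(\dottedS)_\gamma$ lies in the centralizer of $T_\gamma$. The centralizer of a single element in a Hausdorff topological group is closed, being the vanishing locus of the continuous map $g \mapsto g T_\gamma g^{-1} T_\gamma^{-1}$; hence it contains the closure of $\Pure(\dottedS)_\gamma$, which by Proposition \ref{ProfiniteStabilizersForPants} equals $\widehat{\Pure}(\dottedS)_\gamma$. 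Combining with the first step, $\mathbf{K}(\dottedS)$ commutes with $T_\gamma$ for every $\gamma \in \calP$.

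Third, I would invoke the generation statement recorded just before the corollary: the Dehn twists $\{T_\gamma : \gamma \in \calP\}$ generate $\mathfrak{b}(\dottedS)$. Since in any group the centralizer of a generating set coincides with the centralizer of the subgroup it generates, the two previous steps yield $\mathbf{K}(\dottedS) \subset C_{\widehat{\Pure}(\dottedS)}\bigl(\mathfrak{b}(\dottedS)\bigr)$, which is the desired conclusion. I do not anticipate any genuine obstacle here: all of the substantive work lives in Theorem \ref{sequester}, and this corollary is essentially a streamlined packaging of its vertex--stabilizer content.
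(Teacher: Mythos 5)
Your argument is exactly the one the paper gives: Theorem \ref{sequester} places $\mathbf{K}(\dottedS)$ in each $\widehat{\Pure}(\dottedS)_\gamma$, each such stabilizer centralizes $T_\gamma$, and the twists $T_\gamma$ for $\gamma\in\calP$ generate $\mathfrak{b}(\dottedS)$. The only difference is that you spell out the routine justification (closedness of centralizers in a Hausdorff topological group together with Proposition \ref{ProfiniteStabilizersForPants}) that the paper leaves implicit, so this is correct and essentially the same proof.
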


\section{Kernels depend only on the genus of affine curves}\label{IndependentKernels}
If $S$ is affine, it follows from Matsumoto's exact sequence
\[
\begin{tikzpicture}[>=to, line width = .075em, baseline=(current bounding box.center)]
\matrix (m) [matrix of math nodes, column sep=1.5em, row sep = 1em, text height=1.5ex, text depth=0.25ex]
{
1 & \widehat{\pi} & \checkGamma(\dottedS) & \checkGamma(S) & 1 \\
};
\path[->,font=\scriptsize]
(m-1-1) edge 	(m-1-2)
(m-1-2) edge 	(m-1-3)
(m-1-3) edge	(m-1-4)
(m-1-4) edge	(m-1-5)
	;
\end{tikzpicture}
\]
that $\smash{\mathbf{K}(\dottedS)}$ and $\mathbf{K}(S)$ are isomorphic, and so the congruence subgroup problem depends only on the genus in the affine case.
It is a deeper theorem of Boggi \cite{BoggiClosed} and, independently, Hoshi and Mochizuki \cite{HoshiMochizuki2009}, that this is true in all cases.

We provide a new proof of this fact in the affine case. 
We are grateful to Marco Boggi for suggesting that our techniques should accomplish this.

\begin{theorem}[Matsumoto]\label{KernelIndependence} If $S$ is affine, then $\mathbf{K}(\dottedS) \cong \mathbf{K}(S)$ and there is a  short exact sequence
\begin{equation}\label{MatsumotoSequence}
\begin{tikzpicture}[>=to, line width = .075em, baseline=(current bounding box.center)]
\matrix (m) [matrix of math nodes, column sep=1.5em, row sep = 1em, text height=1.5ex, text depth=0.25ex]
{
1 & \widehat{\pi} & \checkGamma(\dottedS) & \checkGamma(S) & 1. \\
};
\path[->,font=\scriptsize]
(m-1-1) edge 	(m-1-2)
(m-1-2) edge 	(m-1-3)
(m-1-3) edge	(m-1-4)
(m-1-4) edge	(m-1-5)
	;
\end{tikzpicture}
\end{equation}
\end{theorem}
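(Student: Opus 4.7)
The plan is to show that the continuous epimorphism $\xi\colon \checkGamma(\dottedS) \twoheadrightarrow \widetilde{\Pure}(\dottedS)$ produced in the proof of Theorem \ref{birmanseparable} is an isomorphism; once this is known, the desired sequence \eqref{MatsumotoSequence} is just \eqref{TwiddleSequence} rewritten under $\xi$, and the identification $\mathbf{K}(\dottedS)\cong\mathbf{K}(S)$ follows. Writing $\widetilde{\mathbf{K}}(\dottedS)=\ker\bigl(\widehat{\Pure}(\dottedS)\to\widetilde{\Pure}(\dottedS)\bigr)$, the factorization of this map through $\widehat{\Pure}(\dottedS)\to\checkGamma(\dottedS)$ gives $\mathbf{K}(\dottedS)\subset\widetilde{\mathbf{K}}(\dottedS)$, and injectivity of $\xi$ is exactly the reverse inclusion.

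First I would apply the snake lemma to the commutative diagram
\[
\begin{array}{ccccccccc}
1 & \to & \widehat{\pi} & \to & \widehat{\Pure}(\dottedS) & \xrightarrow{F} & \widehat{\Pure}(S) & \to & 1 \\
 & & \Big\| & & \Big\downarrow & & \Big\downarrow & & \\
1 & \to & \widehat{\pi} & \to & \widetilde{\Pure}(\dottedS) & \to & \checkGamma(S) & \to & 1
\end{array}
\]
whose top row is the profinite Birman sequence from Section \ref{BirmanInjects}, whose bottom row is \eqref{TwiddleSequence}, and whose left vertical equality comes from Theorem \ref{birmanseparable} (concretely, from the relation $\xi\circ\check{\tau}=\widetilde{\tau}$ recorded there). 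The snake lemma then produces an isomorphism $F\colon \widetilde{\mathbf{K}}(\dottedS)\xrightarrow{\sim}\mathbf{K}(S)$, and in particular the composition $\mathbf{K}(\dottedS)\hookrightarrow\widetilde{\mathbf{K}}(\dottedS)\xrightarrow{\sim}\mathbf{K}(S)$ is injective. The theorem is therefore reduced to showing $F\colon \mathbf{K}(\dottedS)\to\mathbf{K}(S)$ is \emph{surjective}.

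For surjectivity, fix a $\gamma\in\calP$ (non-empty because $S$ is affine). For every $\kappa\in\calK(\dottedS)$, Scholium \ref{pushcongruence} gives that $F(\kappa_\gamma)$ is congruence in $\Pure(S)$, so the overgroup $F(\kappa)$ is congruence as well. It follows that $\mathbf{K}(S)\subset\overline{F(\kappa)}=F(\overline{\kappa})$, the last equality being the standard fact that a continuous surjection between compact groups commutes with taking closures. Now fix any lift $k\in\widehat{\Pure}(\dottedS)$ of $k'\in\mathbf{K}(S)$, and for each $\kappa$ form the closed set
\[
A_\kappa \;=\; \widehat{\pi}\,\cap\, k^{-1}\overline{\kappa}\ \subset\ \widehat{\pi}.
\]
Each $A_\kappa$ is non-empty (any two lifts of $k'$ differ by an element of $\widehat{\pi}$), and Lemma \ref{finiteintersections} yields $A_{\kappa_1\cap\kappa_2}\subset A_{\kappa_1}\cap A_{\kappa_2}$, so the family $\{A_\kappa\}$ has the finite intersection property. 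Compactness of $\widehat{\pi}$ then produces $\pi\in\bigcap_\kappa A_\kappa$, whereupon $k\pi\in\bigcap_\kappa\overline{\kappa}=\mathbf{K}(\dottedS)$ is the desired lift of $k'$.

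The single essential input is Scholium \ref{pushcongruence}; everything else is formal snake-lemma and profinite compactness. The main point to verify carefully is that the left vertical map in the snake diagram is genuinely the identity on $\widehat{\pi}$, i.e.\ that the closure of $\calB$ in $\widehat{\Pure}(\dottedS)$ maps isomorphically onto $\widetilde{\tau}(\widehat{\pi})\subset\widetilde{\Pure}(\dottedS)$, and this is immediate from the construction of $\xi$ in the proof of Theorem \ref{birmanseparable}. I do not foresee any deeper obstacle.
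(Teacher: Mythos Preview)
Your argument is correct and takes a genuinely different route from the paper's. The paper's proof leans on its main result, Theorem~\ref{sequester}, to place $\mathbf{K}(\dottedS)$ inside the stabilizer $\widehat{\Pure}(\dottedS)_\gamma$, and then works with the stabilizer sequence $1\to\widehat{\ZZ}\to\widehat{\Pure}(\dottedS)_\gamma\to\widehat{\Pure}(S)\to1$ from Proposition~\ref{ProfiniteStabilizersForPants}: injectivity comes from $\mathbf{K}(\dottedS)\cap\widehat{\ZZ}=1$, and surjectivity from Scholium~\ref{pushcongruence} together with a sequential compactness argument inside $\widehat{\ZZ}\,\overline{\kappa_\gamma}$. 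You instead bypass Theorem~\ref{sequester} entirely: the snake lemma applied to the profinite Birman sequence over \eqref{TwiddleSequence} gives $F\colon\widetilde{\mathbf{K}}(\dottedS)\xrightarrow{\sim}\mathbf{K}(S)$ immediately (injectivity for free), and surjectivity of $F|_{\mathbf{K}(\dottedS)}$ follows from Scholium~\ref{pushcongruence} plus a finite-intersection-property argument in $\widehat{\pi}$. What this buys you is a strictly lighter dependency: your proof shows Matsumoto's theorem needs only Theorem~\ref{birmanseparable} and Scholium~\ref{pushcongruence}, not the cornering of $\mathbf{K}(\dottedS)$ into stabilizers. What the paper's route buys is a demonstration that its main theorem is strong enough to recover Matsumoto directly; the exact sequence is then extracted from the large commuting diagram at the end of Section~\ref{IndependentKernels} rather than from $\xi$ being an isomorphism. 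One small cosmetic point: in your compactness step you write ``$\pi\in\bigcap_\kappa A_\kappa$'', which clashes with the paper's use of $\pi=\pi_1(S)$; pick a different letter.
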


\begin{proof}[Proof of Theorem \ref{KernelIndependence}]
By Theorem \ref{sequester}, we have $\mathbf{K}(\dottedS) \subset \widehat{\Pure}(\dottedS)_\gamma$, where $\gamma$ bounds a pair of pants in $\dottedS$.

By Proposition \ref{ProfiniteStabilizersForPants}, we have the short exact sequence
\begin{equation}
\begin{tikzpicture}[>=to, line width = .075em, baseline=(current bounding box.center)]
\matrix (m) [matrix of math nodes, column sep=1.5em, row sep = 1em, text height=1.5ex, text depth=0.25ex]
{
1 & \widehat{\ZZ} & \widehat{\Pure}(\dottedS)_\gamma & \widehat{\Pure}(S) & 1 \\
};
\path[->,font=\scriptsize]
(m-1-1) edge 							(m-1-2)
(m-1-2) edge 							(m-1-3)
(m-1-3) edge node[auto] {$\widehat{\Phi}$}		(m-1-4)
(m-1-4) edge							(m-1-5)
	;
\end{tikzpicture}
\end{equation}
where $\smash{\widehat{\Phi}}$ is the restriction of the forgetful map $\smash{\widehat{F}}$ to $\smash{\widehat{\Pure}(\dottedS)_\gamma}$.
Exactness on the left follows from Theorem \ref{birmanseparable} and Scott's theorem that $\pi$ is subgroup separable (see the proof of Proposition \ref{ProfiniteStabilizersForPants}).
In fact, these two theorems imply that the $\widehat{\ZZ}$ in this exact sequence injects into $\checkGamma(\dottedS)$.
In particular, $\mathbf{K}(\dottedS)$ intersects this $\widehat{\ZZ}$ trivially.
So $\mathbf{K}(\dottedS)$ injects into $\widehat{\Pure}(S)$.

Given a congruence subgroup $\eta \subset \Pure(S)$, the subgroup $\widehat{F}^{-1}(\eta)$ is a congruence subgroup of $\Pure(\dottedS)$, by Lemma \ref{pullcongruence}.
So,
\begin{align*}
\mathbf{K}(\dottedS) 
& = \bigcap_{\calK(\dottedS)} \overline{\kappa} \\
& \subset \bigcap_{\calK(S)} \widehat{F}^{-1}(\overline{\eta}) \\
& = \widehat{F}^{-1}\bigcap_{\calK(S)} \overline{\eta} \\
& = \widehat{F}^{-1}(\mathbf{K}(S))
\end{align*}
and so $\widehat{F}(\mathbf{K}(\dottedS)) \subset \mathbf{K}(S)$.

On the other hand, by Scholium \ref{pushcongruence}, for every $\kappa$ in $\calK(\dottedS)$, the subgroup $\widehat{\Phi}(\kappa_\gamma)$ is a congruence subgroup of $\Pure(S)$.
So 
\[
\mathbf{K}(S) 
= \bigcap_{\calK(S)} \overline{\eta} 
\subset \bigcap_{\calK(\dottedS)} \widehat{\Phi}(\overline{\kappa_\gamma}) ,
\]
which yields
\begin{align*}
\widehat{\Phi}^{-1}(\mathbf{K}(S))
& \subset \widehat{\Phi}^{-1} \Bigg(\bigcap_{\calK(\dottedS)} \widehat{\Phi}(\overline{\kappa_\gamma}) \Bigg)\\
& = \bigcap_{\calK(\dottedS)} \widehat{\Phi}^{-1}\left(\widehat{\Phi}(\overline{\kappa_\gamma})\right) \\
& = \bigcap_{\calK(\dottedS)} \widehat{\ZZ}  \, \overline{\kappa_\gamma} 
.
\end{align*}

Now, let $x$ be an element of $\bigcap_{\calK(\dottedS)} \widehat{\ZZ}  \, \overline{\kappa_\gamma}$.  
So for each $\kappa$ in $\calK(\dottedS)$, there are $z_\kappa$ in $\widehat{\ZZ}$ and $x_\kappa$ in $\overline \kappa_\gamma$ such that $x = z_\kappa x_\kappa$.
By Lemma \ref{finiteintersections}, we may enumerate the elements of $\calK(\dottedS)$ and take intersections to obtain a \textit{nested} sequence $\{\kappa^n\}$ in $\calK(\dottedS)$ such that 
\[
\bigcap_{n=1}^\smallinfinity  \overline{\kappa^n_\gamma}
= \bigcap_{\calK(\dottedS)}  \overline{\kappa_\gamma}
= \mathbf{K}(\dottedS) 
.
\]
Let $z_n = z_{\kappa^n}$ and $x_n = x_{\kappa^n}$.
After passing to a subsequence, the $z_n$ converge to some $z$ in $\widehat{\ZZ}$, since $\widehat{\ZZ}$ is compact.
Since the $\overline{\kappa^n_\gamma}$ are compact and nested, we may pass to a further subsequence so that the $x_n$ converge to some $x_\smallinfinity$ in $\smash{\mathbf{K}(\dottedS)}$.
We conclude that $x = z x_\smallinfinity$, which transparently lies in $\widehat{\ZZ} \mathbf{K}(\dottedS)$.
So 
\[
\bigcap_{\calK(\dottedS)} \widehat{\ZZ}  \, \overline{\kappa_\gamma} 
=  \widehat{\ZZ} \, \cdot  \! \!  \bigcap_{\calK(\dottedS)}  \overline{\kappa_\gamma} 
= \widehat{\ZZ} \mathbf{K}(\dottedS).
\]

All together, we have
\[
\smash{
\widehat{\Phi}^{-1}(\mathbf{K}(S))
\subset \bigcap_{\calK(\dottedS)} \widehat{\ZZ}  \, \overline{\kappa_\gamma} 
= \widehat{\ZZ} \, \cdot  \! \!  \bigcap_{\calK(\dottedS)}  \overline{\kappa_\gamma}
= \widehat{\ZZ} \, \mathbf{K}(\dottedS)
}
\]
and so 
\[
\mathbf{K}(S) \subset \widehat{F}(\widehat{\ZZ} \, \mathbf{K}(\dottedS)) = \widehat{F}(\mathbf{K}(\dottedS)).
\]

\noindent
We conclude that $\widehat{F}\big|_{\mathbf{K}(\dottedS)} \co \mathbf{K}(\dottedS) \to \mathbf{K}(S)$ is an isomorphism.

\bigskip
\noindent 
To see that we recover the exact sequence, consider the diagram
\[
\begin{tikzpicture}[>=to, line width = .075em, baseline=(current bounding box.center), descr/.style={fill=white,inner sep=2.5pt}]
\matrix (m)  [matrix of math nodes, row sep=2.5em,
column sep=2.5em, text height=1.5ex, text depth=0.25ex]
{
1  & \widehat{\dottedpi}  & \widetilde \Pure(\doubledottedS) & \checkGamma(\dottedS)  & \widehat{\Pure}(\dottedS)\\
1 & \widehat{\pi} & \widetilde \Pure(\dottedS) & \checkGamma(S) & \widehat{\Pure}(S) \\
};
\path[->, font=\scriptsize]
(m-1-1)	edge							(m-1-2)
(m-1-2)	edge							(m-1-3)
		edge							(m-2-2)
(m-1-3)	edge[->>]						(m-1-4)
		edge	node[left] {$\widetilde{F}$}		(m-2-3)
(m-1-4)	edge	node[auto] {$\check{F}$}		(m-2-4)
		edge[->>] node[descr] {$\xi$}		(m-2-3)
(m-1-5)	edge	node[auto] {$\widehat{F}$}	(m-2-5)	
		edge	node[auto] {$\mathring \rho$}		(m-1-4)
(m-2-1)	edge							(m-2-2)
(m-2-2)	edge							(m-2-3)
(m-2-3)	edge[->>] node[below] {$q$}		(m-2-4)
(m-2-5)	edge	node[auto] {$\rho$}			(m-2-4)
	;
\end{tikzpicture}
\]
where the diagonal map $\xi$ is the map from the proof of Theorem \ref{birmanseparable} and $\check{F} = q \circ \xi$.
Other than the right--most square, the diagram commutes by definition.
To see that this square commutes, note that 
\[
\check{F} \circ \mathring \rho \big|_{\Pure(\dottedS)} 
= F 
= \rho \circ \widehat{F} \big|_{\Pure(\dottedS)} \, ,
\]
and that, by the universal property of the profinite completion, there is a unique continuous homomorphism to $\checkGamma(S)$ extending $F$, which must be 
$\check{F} \circ \mathring \rho \big|_{\Pure(\dottedS)} 
= \rho \circ \widehat{F} \big|_{\Pure(\dottedS)}$.

Suppose that $\varphi$ is an element of $\ker(\check{F}) - \calB^\star$. 
Pick an element $\varphi'$ in $\widehat{\Pure}(\dottedS)$ in the preimage of $\varphi$.
Since the diagram commutes, we know that $\widehat{F}(\varphi')$ lies in $\mathbf{K}(S)$.
Since 
\[
\widehat{F}\big|_{\mathbf{K}(\dottedS)} \co \mathbf{K}(\dottedS) \to \mathbf{K}(S)
\]
is an isomorphism, we may choose an element $\psi$ of $\smash{\overline{\calB} = \ker(\widehat{F})}$ such that $\psi\varphi'$ lies in $\mathbf{K}(\dottedS)$.
But this means that we may multiply $\varphi$ by an element of $\calB^\star$ to obtain the trivial element, contradicting our choice of $\varphi$.
We conclude that $\ker(\check{F}) = \calB^\star \cong \widehat{\pi}$.
\end{proof}

\section{The congruence subgroup problem in genus one}\label{Section:GenusOne}

In this section, marked points are more convenient than punctures.
We let $S_{g,n}$ be the oriented surface of genus $g$ with a set $X_{g,n}$ of $n$ marked points, and let $\pi_{g,n} = \pi_1(S_{g,n} - X_{g,n})$.
From this viewpoint, $\Mod(S_{g,n})$ is the group of orientation preserving homeomorphisms of the pair $(S_{g,n}, X_{g,n})$ up to isotopy of pairs, see Chapter 2 of \cite{FarbMargalit}.
We let $\Pure_{g,n} = \Pure(S_{g,n})$ be the kernel of the action of $\Mod(S_{g,n})$ on $\mathrm{H}_1(S_{g,n} - X_{g,n}; \mathbb{F}_3)$.
If $\sigma$ is an essential simple closed curve in $S_{g,n}$, we let $T_\sigma$ be the positive Dehn twist in $\sigma$.

We give a new proof of Asada's theorem \cite{Asada.2001} that the congruence kernel vanishes in genus one.
Our proof uses properties of centralizers in free profinite groups due to Herfort and Ribes \cite{HerfortRibes} (also featured in the proof given in \cite{BuxErshovRapinchuk}) and our theorem on congruence kernels centralizing braids.
It was inspired by the proofs of Lemmata 3.8 and 3.9 of \cite{BoggiLochak}.
A novel feature of the argument is that it deduces the general case from the case of $\Mod(S_{1,2})$ using Matsumoto's exact sequence \eqref{MatsumotoSequence}, rather than beginning with $\Mod(S_{1,1})$.
A similar strategy of increasing the number of punctures is taken in Boggi's proof of the genus--$2$ case \cite{BoggiOpen,BoggiClosed}, and this leads us to speculate that the general case may yield to imposing greater and greater restrictions on the congruence kernel in more and more highly punctured surfaces. 
Boggi's argument in \cite{BoggiOpen} also applies to the genus--one case (see remark 3.4 of \cite{BoggiOpen} and the following observation there).

\begin{theorem}[Asada \cite{Asada.2001}] 
$\Mod(S_{1,n})$ has the congruence subgroup property if $n \geq 1$.
\end{theorem}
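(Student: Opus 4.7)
The plan is to use Matsumoto's theorem to reduce to showing $\mathbf{K}(S_{1,2}) = 1$, and then to attack this case by combining the braid-centralizing Corollary~\ref{Corollary:BraidCentralizer} with the Herfort--Ribes theorem that centralizers of non-trivial elements in free profinite groups are procyclic.

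Since every $S_{1,n}$ with $n \geq 1$ is affine, iterating Theorem~\ref{KernelIndependence} gives $\mathbf{K}(S_{1,n}) \cong \mathbf{K}(S_{1,2})$ for every $n \geq 1$, so it suffices to prove $\mathbf{K}(S_{1,2}) = 1$. Viewing $S_{1,2}$ as $\dottedS$ for $S = S_{1,1}$, Theorem~\ref{birmanseparable} and Scott's subgroup separability yield the exact sequence
\[
1 \longrightarrow \widehat{F_2} \longrightarrow \widehat{\Pure}(S_{1,2}) \longrightarrow \widehat{\Pure}(S_{1,1}) \longrightarrow 1,
\]
with $\widehat{F_2} = \widehat{\pi_1(S_{1,1})}$ identified with the closure of the Birman kernel $\calB$. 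Matsumoto's theorem (Theorem~\ref{KernelIndependence}) says that the forgetful map restricts to an isomorphism $\mathbf{K}(S_{1,2}) \to \mathbf{K}(S_{1,1})$, in particular $\mathbf{K}(S_{1,2}) \cap \widehat{F_2} = 1$. Corollary~\ref{Corollary:BraidCentralizer} tells us that $\mathbf{K}(S_{1,2})$ centralizes $\mathfrak{b}(S_{1,2}) \supset \calB$, and taking closures $\mathbf{K}(S_{1,2})$ centralizes $\widehat{F_2}$. A short diagram chase (using Matsumoto's theorem and the triviality of $Z(\widehat{F_2})$) identifies $\mathbf{K}(S_{1,2})$ with the full centralizer $C_{\widehat{\Pure}(S_{1,2})}(\widehat{F_2})$, so the problem reduces to showing this centralizer is trivial.

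The main obstacle is this last step. Here I would apply the main theorem with both choices of ``distinguished puncture'' of $S_{1,2}$, placing $\mathbf{K}(S_{1,2})$ inside the intersection $\widehat{\Pure}(S_{1,2})_{\gamma_1} \cap \widehat{\Pure}(S_{1,2})_{\gamma_2}$, where $\gamma_i$ is a curve in $\calP$ associated with the puncture $p_i$. Every $k \in \mathbf{K}(S_{1,2})$ then commutes with both Dehn twists $T_{\gamma_1}, T_{\gamma_2}$ and with all of $\widehat{F_2}$. The plan is to combine these simultaneous commutation conditions with Herfort--Ribes rigidity: each non-trivial $w \in \widehat{F_2}$ has procyclic centralizer in $\widehat{F_2}$, so two non-commuting elements of $F_2 \subset \widehat{F_2}$ can have no non-trivial common centralizer in $\widehat{F_2}$. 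Against the two independent $T_{\gamma_i}$-stabilizer conditions and the splitting $\mathbf{K}(S_{1,2}) \cap \widehat{F_2} = 1$, this should force each $k \in \mathbf{K}(S_{1,2})$ to lie in $\widehat{F_2}$, and hence to be trivial. Once $\mathbf{K}(S_{1,2}) = 1$ is in hand, Matsumoto's theorem immediately yields $\mathbf{K}(S_{1,n}) = 1$ for every $n \geq 1$. The technically delicate heart of the proof is this final step: converting the Herfort--Ribes statement about centralizers inside $\widehat{F_2}$ into a triviality statement for elements of $\widehat{\Pure}(S_{1,2})$ lying outside $\widehat{F_2}$ that centralize $\widehat{F_2}$.
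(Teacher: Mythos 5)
Your opening moves match the paper: the reduction to $\mathbf{K}(S_{1,2})=1$ via Theorem \ref{KernelIndependence}, and the observation that $\mathbf{K}(S_{1,2})$ meets $\overline{\calB}\cong\widehat{\pi}_{1,1}$ trivially (which is exactly how the paper's proof terminates). But your central step does not close. First, the extra leverage you hope to get from $T_{\gamma_1},T_{\gamma_2}$ is illusory: for $\dottedS=S_{1,2}$ a curve in $\calP$ bounds a disk containing both marked points, so $T_{\gamma_i}$ is the point--push of the distinguished point around the other one and already lies in $\calB$; moreover both choices of distinguished puncture yield the \emph{same} set $\calP$. So ``$k$ commutes with $T_{\gamma_1},T_{\gamma_2}$'' is subsumed by ``$k$ centralizes $\overline{\calB}$.'' And that latter condition, combined with Matsumoto, is not a reduction but a restatement: the conjugation action of $\widehat{\Pure}(S_{1,2})$ on $\overline{\calB}\cong\widehat{\pi}_{1,1}$ is precisely the map onto $\widetilde{\Pure}(S_{1,2})\cong\checkGamma(S_{1,2})$, so $C_{\widehat{\Pure}(S_{1,2})}\big(\overline{\calB}\big)=\mathbf{K}(S_{1,2})$ holds tautologically, and ``show the centralizer is trivial'' is the original problem with no new input.

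Second, Herfort--Ribes cannot be applied the way you intend: it controls centralizers of elements of a free profinite group \emph{inside that group}, whereas your $k$ lives in the extension of $\widehat{\Pure}(S_{1,1})$ by $\widehat{\pi}_{1,1}$. Procyclicity of centralizers inside $\widehat{\pi}_{1,1}$ places no obstruction on a nontrivial closed subgroup of the ambient group that centralizes all of $\widehat{\pi}_{1,1}$ and maps injectively into $\widehat{\Pure}(S_{1,1})$ --- which is exactly what $\mathbf{K}(S_{1,2})$ would be if it were nontrivial. The missing idea is a mechanism for landing (an image of) $\mathbf{K}_{1,2}$ \emph{inside} a free profinite group in which the relevant twists are basis elements. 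This is what the paper's hyperelliptic involution accomplishes: it identifies a finite--index congruence subgroup $\kappa\subset\Mod(S_{1,2})$ with $\Pure_{0,5}$; forgetting a marked point carries the images of $T_\alpha,T_\beta$ to the two free generators of $\widehat{\Pure}_{0,4}$, so Herfort--Ribes forces $\zeta(\mathbf{K}_{1,2})$ into $\widehat{\pi}_{\,0,4}$; there $T_{\gamma'}$ is a basis element, so $\zeta(\mathbf{K}_{1,2})\subset\overline{\langle T_{\gamma'}\rangle}$, hence $\mathbf{K}_{1,2}\subset\overline{\langle T_{\gamma}\rangle}\subset\widehat{\pi}_{1,1}$, and your trivial--intersection observation finishes. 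Without some such transfer into a free profinite group, the centralizer argument you propose has no purchase on elements lying outside $\widehat{\pi}_{1,1}$.
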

\begin{proof}
The quotient
$
S_{1,2} \to S_{0,5}
$
by the hyperelliptic involution induces an injection
$
\Pure_{0,5} \to \Mod(S_{1,2})
$
onto a finite index subgroup $\kappa \subset \Mod(S_{1,2})$ (see \cite{Birman.Hilden.1973} and \cite{MaclachlanHarvey} for more general discussions of maps induced by branched covers).
The subgroup $\kappa$ contains the kernel of the action of $\Mod(S_{1,2})$ on $\mathrm{H}_1(S_{1,2} - X_{1,2}; \mathbb{F}_2)$ and is hence a congruence subgroup.
We let
$
\zeta \co \widehat{\kappa} \to \smash{\widehat{\Pure}_{0,5}}
$
be the induced isomorphism.
Since $\kappa$ is congruence, $\widehat{\kappa}$ contains $\mathbf{K}_{1,2} = \mathbf{K}(S_{1,2})$.

\begin{figure}
\begin{center}
\input{Hyperelliptic.pdftex_t}
\end{center}
\end{figure}

Let $\alpha$ and $\beta$ be the essential simple closed curves on $S_{1,2}$ in the figure.
Each bounds a disk containing both marked points.
By Corollary \ref{Corollary:BraidCentralizer}, the congruence kernel $\mathbf{K}_{1,2}$ lies in the $\widehat{\kappa}$--centralizers of $T_\alpha$ and $T_\beta$.
The curves $\alpha$ and $\beta$ descend to curves $\alpha'$ and $\beta'$ on $S_{0,5}$ as shown. 
We have 
	$\zeta(T_\alpha) = T_{\alpha'}^2$ and
	$\zeta(T_\beta) = T_{\beta'}^2$,
and so $\zeta(\mathbf{K}_{1,2})$ lies in the $\smash{\widehat{\Pure}_{0,5}}$--centralizers of $\smash{T_{\alpha'}^2}$ and $\smash{T_{\beta'}^2}$.

Let $z$ be the marked point in $S_{0,5}$ as pictured.  
Forgetting $z$ gives us an exact sequence
\[
\begin{tikzpicture}[>=to, line width = .075em, baseline=(current bounding box.center)]
\matrix (m) [matrix of math nodes, column sep=1.5em, row sep = 1em, text height=1.5ex, text depth=0.25ex]
{
1 & \widehat{\pi}_{\,0,4} & \widehat{\Pure}_{0,5} & \widehat{\Pure}_{0,4} & 1.  \\
};
\path[->,font=\scriptsize]
(m-1-1) edge 				(m-1-2)
(m-1-2) edge 				(m-1-3)
(m-1-3) edge 	node[auto]{$f_{0,5}$}		(m-1-4)
(m-1-4) edge				(m-1-5)
	;
\end{tikzpicture}
\]
The curves $\alpha'$ and $\beta'$ descend to curves $\alpha''$ and $\beta''$ on $S_{0,4}$ with $f_{0,5}(T_{\alpha'}) = T_{\alpha''}$ and $f_{0,5}(T_{\beta'}) = T_{\beta''}$.
So $f_{0,5}\big(\zeta(\mathbf{K}_{1,2})\big)$ centralizes $T_{\alpha''}^2$ and $T_{\beta''}^2$.
Now, $\widehat{\Pure}_{0,4}$ is the free profinite group generated by $T_{\alpha''}$ and $T_{\beta''}$.
By the main theorem of \cite{HerfortRibes}, the centralizer of any power $x^m$ of a basis element $x$ in a free profinite group is the closed subgroup generated by $x$, see also Lemma 2.1.2 of \cite{Nakamura}.  
It follows that the centralizers of the squares of two elements of a basis intersect trivially, and so $\zeta(\mathbf{K}_{1,2})$ lies in $\widehat{\pi}_{\,0,4}$.

We now pick a curve $\gamma$ in $S_{1,2}$ descending to a curve $\gamma\,'$  in $S_{0,5}$ as in the figure.
Again, the congruence kernel $\mathbf{K}_{1,2}$ centralizes $T_\gamma$ and $\zeta(\mathbf{K}_{1,2})$ centralizes $\smash{T_{\gamma\,'}^2}$.
The Dehn twist $T_{\gamma\,'}$ is a basis element in the free profinite group $\widehat{\pi}_{\,0,4}$, and so the centralizer of $\smash{T_{\gamma\,'}^2}$ is the subgroup $\smash{\overline{\langle T_{\gamma\,'}\rangle}}$, again by \cite{HerfortRibes}.
Since $\zeta\big(\mathbf{K}_{1,2})$ lies in $\widehat{\pi}_{\,0,4}$ and centralizes $\smash{T_{\gamma\,'}^2}$, we conclude that $\zeta(\mathbf{K}_{1,2})$ lies in $\smash{\overline{\langle T_{\gamma\,'}\rangle}}$.
Now, we have $\zeta(\widehat{\kappa}) \cap \overline{\langle T_{\gamma\,'}\rangle} = \overline{\langle T_{\gamma\,'}^2\rangle}$, and so $\zeta(\mathbf{K}_{1,2}) \subset \overline{\langle T_{\gamma\,'}^2\rangle}$.
Since $\overline{\langle T_{\gamma\,'}^2\rangle} = \zeta\big(\overline{\langle T_{\gamma}\rangle}\big)$,  we conclude that $\mathbf{K}_{1,2} \subset \overline{\langle T_{\gamma}\rangle}$.
But $\overline{\langle T_{\gamma}\rangle}$ lies in the kernel of 
\[
\begin{tikzpicture}[>=to, line width = .075em, baseline=(current bounding box.center)]
\matrix (m) [matrix of math nodes, column sep=1.5em, row sep = 1em, text height=1.5ex, text depth=0.25ex]
{
1 & \widehat{\pi}_{1,1} & \widehat{\Pure}_{1,2} & \widehat{\Pure}_{1,1} & 1,  \\
};
\path[->,font=\scriptsize]
(m-1-1) edge 				(m-1-2)
(m-1-2) edge 				(m-1-3)
(m-1-3) edge 	node[auto]{$f_{1,2}$}		(m-1-4)
(m-1-4) edge				(m-1-5)
	;
\end{tikzpicture}
\]
and, by Theorem \ref{birmanseparable}, the congruence kernel $\mathbf{K}_{1,2}$ intersects $\widehat{\pi}_{1,1}$ trivially.
We conclude that $\mathbf{K}_{1,2} = 1$.

By Theorem \ref{KernelIndependence}, we have $\mathbf{K}(S_{1,n})=1$ for all $n \geq 1$.
\end{proof}

\section{Thoughts on the fundamental group at each level}\label{Section:Thoughts}

Boggi has proven the beautiful theorem that if $\mathfrak{a}$ is a finite index subgroup of $\Mod(S)$, then the rational homology $\mathrm{H}_k(\curv^\mathfrak{a}\, ; \QQ)$ vanishes in the range $0 \leq k \leq -\chi(S)$ if $S$ is closed and $0 \leq k \leq -\chi(S) -1$ otherwise, see Lemma 5.5 of \cite{Boggi.2006}.\footnote{The proof of the main theorem of \cite{Boggi.2006} contains a gap, but the proof of Lemma 5.5 of \cite{Boggi.2006} is complete. See \cite{Abramovich.MR2242960}.}
In light of this and the fact that $\widehat{\curv}(S)$ is simply--connected, it is reasonable to ask if the $\curv^\mathfrak{a}$ are simply--connected.
The proofs that $\widehat{\curv}(S)$ and $\checkcurv(\doubledottedS)$ are simply--connected leave open the possibility that $\pi_1(\curv^\mathfrak{a})$ is nontrivial (perhaps infinite) and so new ideas are needed to answer this question.
We record some of our thoughts here.

The simplicial complex $\curv(S)$ comes equipped with the weak topology.
There is another natural topology; the topology induced by declaring each simplex to be a regular Euclidean simplex with edges of length one, and taking the induced path metric.

Let $S$ be such that $\dim\curv(S) \geq 2$.

Let $\mathfrak{a}$ be in $\calA$ and $\sigma$ a nonperipheral simple loop in $S$.
The group $\mathfrak{a}_\sigma$ has finite index in $\Pure(S)_\sigma$, which acts cocompactly on the star of $\sigma$, and so the simplicial complex $\curv^{\mathfrak{a}_*} = \curv(S)/\mathfrak{a}_*$ is locally finite.
In particular, the complex $\curv^{\mathfrak{a}_*}$ is compact if and only if it has finite diameter in the metric induced by that of $\curv(S)$.
By Armstrong's theorem, $\pi_1(\curv^\mathfrak{a})$ is finite if and only if $\curv^{\mathfrak{a}_*}$ is compact.

There is a Serre fibration $\curv(\dottedS) \to \curv(S)$ whenever $S$ is projective \cite{KentLeiningerSchleimer}.
When $S$ is affine, there is no map from $\curv(\dottedS)$ to $\curv(S)$, but there is a $1$--dense subcomplex $\durv(\dottedS)$ for which there is a map
\[
p \co\durv(\dottedS) \to \curv(S),
\]
see \cite{KentLeiningerSchleimer}.
Each fiber of this map is $1$--dense in $\curv(\dottedS)$, and
so $\pi_1(\curv^\mathfrak{a})$ will be finite if and only if a fiber $\mathcal{T}$ of $p$ projects to a set of finite diameter in $\curv^{\mathfrak{a}_*}$.

{\small
\bibliographystyle{plain}
\bibliography{procongruence}
}

\bigskip

\noindent Department of Mathematics, University of Wisconsin, Madison, WI 53706 
\newline \noindent  \texttt{rkent@math.wisc.edu}

\end{document}